\newtheorem{theorem}{Theorem}
\theoremstyle{definition}
\theoremstyle{remark}
\numberwithin{equation}{section}
\begin{document}

\title[A note on an integral by Grigorii Mikhailovich Fichtenholz]{A note on an integral by Grigorii Mikhailovich Fichtenholz}

\author{Robert Reynolds}
\address{Department of Mathematics and Statistics, York University, Faculty of Science, York University}
\email{milver@my.yorku.ca}


\subjclass{Primary 30-02, 30D10, 30D30, 30E20, 11M35, 11M06, 01A55}

\keywords{entries in Gradshteyn and Rhyzik, Lerch function, Logarithm function, Contour Integral, Cauchy, Infinite Integral}

\maketitle

\begin{abstract}

In this manuscript, the author derive a definite integral involving the logarithmic function, function of powers and polynomials in terms of the Lerch function. A summary of the results is produced in the form of a table of definite integrals for easy referencing by readers.

\end{abstract}

\section{Introduction}

In 1948 Grigorii Mikhailovich Fichtenholz produced volume II of his three volume collection \cite{fitch}. In volume II the author found an integral (3.244.4) in \cite{grad} which is of interest because of its closed form solution over the real line. However, upon closer inspection and evaluation of this integral and applying our simultaneous contour integral method we found this integral is not symmetric over the real when the logarithmic function is introduced into the integrand. This logarithmic term appears after applying our contour integral method to this Fichtenholz integral. This consequence lead us to produce this manuscript which achieves two objectives. The first, is that of producing formal derivations for some definite integrals in Table 3.244 in \cite{grad}. The second goal is to produce more definite integrals as an expansion of the current Table 3.244 in \cite{grad}. These gaols are achieved by using this integral by Fichtenholz along with our contour integral method to form a closed solution in terms of the Lerch function. The Lerch function being a special function has the fundamental property of analytic continuation, which enables us to widen the range of evaluation for the parameters involved in our definite integral.  

The definite integral the author derived using the integral by Fichtenholz in this manuscript is given by
 
 \begin{dmath}
 \int_{0}^{\infty}\frac{\left(x^{2 n}-x^{2 m}\right) \log ^k\left(a x^2\right)}{x^{2 l}-1}dx
  \end{dmath}

in terms of the Lerch function, where the parameters $k$, $a$, $m$, $n$ and $l$ are general complex numbers. A summary of the results is given in a table of integrals for easy reading. This work is important because the author were unable to find similar results in current literature. Tables of definite integrals provide a useful summary and reference for readers seeking such integrals for potential use in their research. We use our simultaneous contour integration method to aid in our derivations of the closed forms solutions in terms of the Lerch function, which provides analytic continuation of the results. The derivations follow the method used by us in \cite{reyn2}. The generalized Cauchy's integral formula is given by

\begin{equation}\label{intro:cauchy}
\frac{y^k}{k!}=\frac{1}{2\pi i}\int_{C}\frac{e^{wy}}{w^{k+1}}dw.
\end{equation}

 This method involves using a form of equation (\ref{intro:cauchy}) then multiply both sides by a function, then take a definite integral of both sides. This yields a definite integral in terms of a contour integral. A second contour integral is derived by multiplying equation (\ref{intro:cauchy}) by a function and performing some substitutions so that the contour integrals are the same.
 

\section{Definite integral of the contour integral}

We use the method in \cite{reyn2}. Here the contour is similar to Figure 2 in \cite{reyn2}. Using a generalization of Cauchy's integral formula equation (\ref{intro:cauchy}) replace $y$ by $\log(ax^2)$ followed by multiplying both sides by $\frac{x^{2 m}-x^{2 n}}{1-x^{2 l}}$ and taking the definite integral over $x\in[0,\infty)$ to get

\begin{dmath}\label{eq1a}
\frac{1}{k!}\int_{0}^{\infty}\frac{\left(x^{2 n}-x^{2 m}\right) \log ^k\left(a x^2\right)}{x^{2 l}-1}dx
=\frac{1}{2\pi i}\int_{0}^{\infty}\int_{C}\frac{a^w w^{-k-1} \left(x^{2 (m+w)}-x^{2 (n+w)}\right)}{1-x^{2 l}}dwdx
=\frac{1}{2\pi i}\int_{C}\int_{0}^{\infty}\frac{a^w w^{-k-1} \left(x^{2 (m+w)}-x^{2 (n+w)}\right)}{1-x^{2 l}}dxdw
=\frac{1}{2\pi i}\int_{C}\frac{\pi  a^w w^{-k-1} \left(\cot \left(\frac{\pi  (2 (m+w)+1)}{2
   l}\right)-\cot \left(\frac{\pi  (2 (n+w)+1)}{2 l}\right)\right)}{2 l}dw
\end{dmath}

from equation (3.244.4) in \cite{grad} and the integral is valid for $a$, $m$, $l$, $n$ and $k$ complex and $-1<Re(w+m)<0$ an $-1<Re(w+n)<0$. The logarithmic function is defined in equation (4.1.2) in \cite{as}

\section{The Lerch function}

The Lerch function has a series representation given by
\begin{equation}\label{lerch:eq}
\Phi(z,s,v)=\sum_{n=0}^{\infty}(v+n)^{-s}z^{n}
\end{equation}

where $|z|<1, v \neq 0,-1,..$ and is continued analytically by its integral representation given by

\begin{equation}\label{armenia:eq8}
\Phi(z,s,v)=\frac{1}{\Gamma(s)}\int_{0}^{\infty}\frac{t^{s-1}e^{-vt}}{1-ze^{-t}}dt=\frac{1}{\Gamma(s)}\int_{0}^{\infty}\frac{t^{s-1}e^{-(v-1)t}}{e^{t}-z}dt
\end{equation}

where $Re(v)>0$, or $|z| \leq 1, z \neq 1, Re(s)>0$, or $z=1, Re(s)>1$.  

\section{Infinite sum of the contour integral}

\subsection{Derivation of the first contour integral}

In this section we will derive the contour integral given by

\begin{dmath}
\frac{1}{2\pi i}\int_{C}\frac{\pi  a^w w^{-k-1} \cot \left(\frac{\pi  (2 (m+w)+1)}{2 l}\right)}{2 l}dw
\end{dmath}

Again, using the method in \cite{reyn2} and equation (\ref{intro:cauchy}), we replace $y$ by $\log (a)+\frac{2 i \pi  (y+1)}{l}$ multiply both sides by $-\frac{2 i \pi  }{l}e^{\frac{i \pi  (2 m+1) (y+1)}{l}}$ and take the infinite sum of both sides over $y \in [0,\infty)$ simplifying in terms the Lerch function to get

\begin{dmath}\label{eq2a}
-\frac{i (2 \pi )^{k+1} \left(\frac{i}{l}\right)^k e^{\frac{i (2 \pi  m+\pi )}{l}} \Phi \left(e^{\frac{i (2 m+1) \pi }{l}},-k,1-\frac{i l
   \log (a)}{2 \pi }\right)}{l k!}
   =-\frac{1}{2\pi i}\sum_{y=0}^{\infty}\int_{C}\frac{2 i \pi  a^w w^{-k-1} e^{\frac{i \pi  (y+1) (2 m+2 w+1)}{l}}}{l}dw
    =-\frac{1}{2\pi i}\int_{C}\sum_{y=0}^{\infty}\frac{2 i \pi  a^w w^{-k-1} e^{\frac{i \pi  (y+1) (2 m+2 w+1)}{l}}}{l}dw
   =\frac{1}{2\pi i}\int_{C}\left(\frac{\pi  a^w w^{-k-1} \cot \left(\frac{\pi  (2 m+2 w+1)}{2 l}\right)}{l}+\frac{i \pi  a^w w^{-k-1}}{l}\right)dw
\end{dmath}

similar to equation (1.232.1) in \cite{grad} where

\begin{dmath}
\cot(x)=-2i\sum_{y=0}^{\infty}e^{2xi(y+1)}-i
\end{dmath}

where $Re(x)>0$.

\subsection{Derivation of the second contour integral}

In this section we will derive the contour integral given by

\begin{dmath}
-\frac{1}{2\pi i}\int_{C}\frac{\pi  a^w w^{-k-1} \cot \left(\frac{\pi  (2 n+2 w+1)}{2 l}\right)}{l}dw
\end{dmath}

Again, using the method in \cite{reyn2} and equation (\ref{intro:cauchy}), we replace $y$ by $\log (a)+\frac{2 i \pi  (y+1)}{l}$ multiply both sides by $-\frac{2 i \pi }{l} e^{\frac{i \pi  (2 n+1) (y+1)}{l}}$ and take the infinite sum of both sides over $y \in [0,\infty)$ simplifying in terms the Lerch function to get

\begin{dmath}\label{eq2b}
\frac{i (2 \pi )^{k+1} \left(\frac{i}{l}\right)^k e^{\frac{i (2 \pi  n+\pi )}{l}} \Phi \left(e^{\frac{i (2 n+1) \pi }{l}},-k,1-\frac{i l
   \log (a)}{2 \pi }\right)}{l k!}
   =\frac{1}{2\pi i}\sum_{y=0}^{\infty}\int_{C}\frac{2 i \pi  a^w w^{-k-1} e^{\frac{i \pi  (y+1) (2 n+2 w+1)}{l}}}{l}dw
   =\frac{1}{2\pi i}\int_{C}\sum_{y=0}^{\infty}\frac{2 i \pi  a^w w^{-k-1} e^{\frac{i \pi  (y+1) (2 n+2 w+1)}{l}}}{l}dw
   =-\frac{1}{2\pi i}\int_{C}\left(\frac{\pi  a^w w^{-k-1} \cot \left(\frac{\pi  (2 n+2 w+1)}{2 l}\right)}{l}-\frac{i \pi  a^w
   w^{-k-1}}{l}\right)dw
\end{dmath}

similar to equation (1.232.1) in \cite{grad} where

\begin{dmath}
\cot(x)=-2i\sum_{y=0}^{\infty}e^{2xi(y+1)}-i
\end{dmath}

where $Re(x)>0$.

\section{Definite integral in terms of the Lerch function}

\begin{theorem}
For all $m,n,a,k,l \in\mathbb{C}$,
\begin{dmath}\label{eq:dilerch}
\int_{0}^{\infty}\frac{\left(x^{2 n}-x^{2 m}\right) \log ^k\left(a x^2\right)}{x^{2 l}-1}dx=\frac{i 2^k \pi ^{k+1} \left(\frac{i}{l}\right)^k e^{\frac{i (2 \pi
    n+\pi )}{l}} \Phi \left(e^{\frac{i (2 n+1) \pi }{l}},-k,1-\frac{i l \log (a)}{2 \pi }\right)}{l}-\frac{i 2^k \pi ^{k+1}
   \left(\frac{i}{l}\right)^k e^{\frac{i (2 \pi  m+\pi )}{l}} \Phi \left(e^{\frac{i (2 m+1) \pi }{l}},-k,1-\frac{i l \log (a)}{2 \pi
   }\right)}{l}
\end{dmath}
\end{theorem}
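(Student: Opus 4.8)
The plan is to assemble the identity directly from the three contour-integral equations already in hand: the representation (\ref{eq1a}) of the target integral as a single contour integral, together with the two closed-form evaluations (\ref{eq2a}) and (\ref{eq2b}). First I would take the rightmost member of (\ref{eq1a}), which expresses $\frac{1}{k!}$ times the desired integral as
\[
\frac{1}{2\pi i}\int_{C}\frac{\pi a^w w^{-k-1}\left(\cot\left(\frac{\pi(2(m+w)+1)}{2l}\right)-\cot\left(\frac{\pi(2(n+w)+1)}{2l}\right)\right)}{2l}\,dw,
\]
and split it by linearity into an $m$-piece and an $n$-piece. Each of these pieces is precisely the contour integral treated in the two preceding subsections, so the core of the argument is to substitute the Lerch-function closed forms (\ref{eq2a}) and (\ref{eq2b}) and then collect terms.

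Before that substitution can be trusted, I would justify the two interchanges on which (\ref{eq1a}), (\ref{eq2a}) and (\ref{eq2b}) rest. The first is the swap of $\int_0^\infty dx$ and $\int_C dw$ in (\ref{eq1a}), which requires the Fichtenholz evaluation (3.244.4) to hold uniformly on the parameter strip $-1<Re(w+m)<0$, $-1<Re(w+n)<0$ traced out along the contour. The second is the swap of $\sum_{y=0}^\infty$ and $\int_C dw$ used to pass from the cotangent series $\cot(x)=-2i\sum_{y=0}^{\infty}e^{2xi(y+1)}-i$ (valid for $Re(x)>0$) to the Lerch function: after replacing $y$ by $\log(a)+\frac{2i\pi(y+1)}{l}$ in (\ref{intro:cauchy}) and multiplying by the exponential prefactor, the sum over $y$ is a geometric-type series that collapses, via the series definition (\ref{lerch:eq}), into $\Phi$. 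The analytic continuation furnished by the integral representation (\ref{armenia:eq8}) then extends the equality from this strip to all complex $m,n,a,k,l$, which is what lets the final statement be quantified over $\mathbb{C}$.

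The decisive bookkeeping step is the cancellation of the spurious additive terms. The constant $-i$ in the cotangent series contributes, in each of (\ref{eq2a}) and (\ref{eq2b}), a term proportional to $\frac{1}{2\pi i}\int_C a^w w^{-k-1}\,dw$, which by (\ref{intro:cauchy}) with $y=\log(a)$ equals $(\log a)^k/k!$ and is therefore generally nonzero. Because this contribution is identical for the $m$-piece and the $n$-piece, it must cancel in the difference of the two cotangents dictated by (\ref{eq1a}), leaving exactly the two Lerch terms. Multiplying through by $k!$ then clears the factorial and reproduces the coefficient $i2^k\pi^{k+1}(i/l)^k/l$ of the stated right-hand side.

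The hard part will be the consistency of normalizations and the cancellation rather than any single estimate: one must check that the factor $2l$ in (\ref{eq1a}) and the factor $l$ appearing in the derivations (\ref{eq2a})--(\ref{eq2b}) are reconciled so that $(2\pi)^{k+1}$ collapses to $2^k\pi^{k+1}$, and that the additive $(\log a)^k/k!$ terms genuinely drop out in the difference rather than doubling. Verifying that the hypothesis $Re(x)>0$ of the cotangent expansion is met along the contour for the $m$- and $n$-arguments simultaneously is the precise place where the simultaneous contour method must be applied with care, before analytic continuation removes the restriction.
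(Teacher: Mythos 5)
Your proposal is correct and follows essentially the same route as the paper, whose proof is the one-line observation that the right-hand side of (\ref{eq1a}) matches the combination of (\ref{eq2a}) and (\ref{eq2b}), after which one equates left-hand sides and multiplies by $k!$. In fact you make explicit the two points the paper glosses over --- the reconciliation of the $2l$ versus $l$ normalizations (which is exactly why $(2\pi)^{k+1}$ becomes $2^k\pi^{k+1}$ in the final coefficient) and the cancellation of the additive $\frac{i\pi a^w w^{-k-1}}{l}$ terms arising from the constant $-i$ in the cotangent series --- so your write-up is, if anything, more careful than the paper's.
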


\begin{proof}
Since the right-hand side of equation (\ref{eq1a}) is equal to the sum of equations (\ref{eq2a}) and (\ref{eq2b}) we can equate the left-hand sides and simplify the factorials.
\end{proof}


\section{Derivation of a logarithmic integral}

\begin{theorem}
For $Re(m)<Re(l)$ and $Re(n)<Re(l)$, 

\begin{dmath}\label{eq5a}
\int_{0}^{\infty}\frac{\log \left(x^2\right) \left(x^{2 n}-x^{2 m}\right)}{x^{2 l}-1}dx=\frac{\pi ^2 \left(\csc ^2\left(\frac{2 \pi  n+\pi }{2 l}\right)-\csc
   ^2\left(\frac{2 \pi  m+\pi }{2 l}\right)\right)}{2 l^2}
\end{dmath}

\end{theorem}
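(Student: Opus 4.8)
The plan is to obtain the statement as the special case $k=1$, $a=1$ of the master formula (\ref{eq:dilerch}). Setting $a=1$ makes $\log(a)=0$, so the third argument of each Lerch function collapses from $1-\frac{il\log(a)}{2\pi}$ to $1-\frac{il\log(1)}{2\pi}=1$, while the factor $\log^k(ax^2)$ on the left-hand side becomes simply $\log(x^2)$. Thus the left side of (\ref{eq:dilerch}) reduces exactly to the integral appearing in (\ref{eq5a}), and the remaining work is purely to simplify the right side under $k=1$ and $v=1$.

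The key computational step is to evaluate the Lerch function at $s=-1$, $v=1$ in closed form. From the series representation (\ref{lerch:eq}) one has $\Phi(z,-1,1)=\sum_{j=0}^{\infty}(j+1)z^{j}=(1-z)^{-2}$, a rational identity that extends by analytic continuation to every $z\neq 1$ on the unit circle, which is precisely the regime $z=e^{i(2n+1)\pi/l}$ (and likewise for $m$) encountered here. Substituting $k=1$ produces the prefactor $i\,2^{1}\pi^{2}(i/l)/l=-2\pi^{2}/l^{2}$ on each term, so the right-hand side becomes
\begin{equation}
-\frac{2\pi^{2}}{l^{2}}\left(\frac{e^{i(2\pi n+\pi)/l}}{\bigl(1-e^{i(2n+1)\pi/l}\bigr)^{2}}-\frac{e^{i(2\pi m+\pi)/l}}{\bigl(1-e^{i(2m+1)\pi/l}\bigr)^{2}}\right).
\end{equation}

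To finish I would convert each complex-exponential block into a cosecant squared. Writing $\theta=(2n+1)\pi/l$, the half-angle factorization $1-e^{i\theta}=-2i\,e^{i\theta/2}\sin(\theta/2)$ gives $(1-e^{i\theta})^{2}=-4e^{i\theta}\sin^{2}(\theta/2)$; since the numerator $e^{i(2\pi n+\pi)/l}$ is exactly $e^{i\theta}$, the phases cancel and each block collapses to $-\tfrac14\csc^{2}(\theta/2)=-\tfrac14\csc^{2}\bigl(\tfrac{2\pi n+\pi}{2l}\bigr)$. Feeding this (together with its $m$-analogue) back in, the factor $-2\pi^{2}/l^{2}$ multiplied by $-\tfrac14$ yields the claimed coefficient $\pi^{2}/(2l^{2})$ and the stated difference of cosecants. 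The only real subtlety is the bookkeeping of the half-angle phase $e^{i\theta/2}$, which must cancel the numerator exponential cleanly so that a manifestly real expression emerges; I would also note that the hypotheses $Re(m)<Re(l)$ and $Re(n)<Re(l)$ are exactly what guarantee convergence of the integral and keep $z$ on the unit circle away from $z=1$, thereby legitimizing the analytic continuation of the series identity.
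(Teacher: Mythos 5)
Your proposal is correct and follows essentially the same route as the paper: the paper's proof likewise specializes the master formula (\ref{eq:dilerch}) to $k=a=1$ and simplifies, citing entry (2) in the table below (64:12:7) of the Atlas of Functions for the identity you derive by hand, namely $\Phi(z,-1,1)=(1-z)^{-2}$ together with the half-angle reduction to $\csc^2$. Your version merely makes that cited simplification explicit, and your computation of the prefactor $-2\pi^2/l^2$ and the phase cancellation checks out.
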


\begin{proof}
Use equation (\ref{eq:dilerch}) and set $k=a=1$ simplify using entry (2) in Table below (64:12:7) in \cite{atlas}.
\end{proof}

\section{Derivation of entry 3.244.4 in \cite{grad}}

\begin{theorem}
For $Re(m)<Re(l)$ and $Re(n)<Re(l)$, 
\begin{dmath}
\int_{0}^{\infty}\frac{x^{2 n}-x^{2 m}}{x^{2 l}-1}dx
=\frac{\pi  \left(\cot \left(\frac{2 \pi  m+\pi }{2 l}\right)-\cot \left(\frac{2 \pi  n+\pi }{2 l}\right)\right)}{2 l}
\end{dmath}
\end{theorem}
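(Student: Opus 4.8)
The plan is to derive this as the $k=0$ specialization of the master formula (\ref{eq:dilerch}). First I would set $k=0$ on both sides. On the left, $\log^0(ax^2)=1$, so the logarithmic factor disappears and the parameter $a$ drops out of the problem entirely, leaving exactly the integral $\int_0^\infty\frac{x^{2n}-x^{2m}}{x^{2l}-1}\,dx$ whose value is sought. Since the master theorem is asserted for all complex $k$, this specialization is legitimate.

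The key step is the evaluation of the Lerch function at $s=-k=0$. From the series representation (\ref{lerch:eq}) one has $\Phi(z,0,v)=\sum_{j=0}^{\infty}z^{j}=\frac{1}{1-z}$, independently of the third argument $v$; this is precisely why the awkward parameter $1-\frac{i l\log(a)}{2\pi}$ becomes irrelevant once $a$ has already dropped out. Substituting $z=e^{i(2n+1)\pi/l}$ and $z=e^{i(2m+1)\pi/l}$ collapses the two Lerch factors to $\frac{1}{1-e^{i(2n+1)\pi/l}}$ and $\frac{1}{1-e^{i(2m+1)\pi/l}}$, while the prefactor $2^k\pi^{k+1}(i/l)^k$ reduces to $\pi$ at $k=0$.

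It then remains to recast the resulting exponential expression into the cotangent form claimed. For the $n$-contribution I would write, with $\theta=\frac{(2n+1)\pi}{l}$, the identity $\frac{e^{i\theta}}{1-e^{i\theta}}=\frac{e^{i\theta/2}}{-2i\sin(\theta/2)}$, then separate real and imaginary parts to obtain $-\frac{\pi}{2l}\cot\!\left(\frac{2\pi n+\pi}{2l}\right)-\frac{i\pi}{2l}$, and treat the $m$-contribution identically. The step I expect to be the main obstacle --- indeed the only content beyond routine algebra --- is verifying that the two spurious imaginary constants $\pm\frac{i\pi}{2l}$ arising from the $m$ and $n$ terms cancel against one another. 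This cancellation is what forces the right-hand side to be real, as it must be for a real integral, and leaves exactly $\frac{\pi}{2l}\left(\cot\!\left(\frac{2\pi m+\pi}{2l}\right)-\cot\!\left(\frac{2\pi n+\pi}{2l}\right)\right)$. The hypotheses $Re(m)<Re(l)$ and $Re(n)<Re(l)$ enter only to guarantee convergence of the original integral at the endpoints $0$ and $\infty$.
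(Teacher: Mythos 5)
Your proposal is correct and is essentially the paper's own route: specialize the master formula (\ref{eq:dilerch}) at $k=0$ (so $a$ drops out of both sides), collapse the Lerch factors via $\Phi(z,0,v)=\frac{1}{1-z}$ — the elementary reduction the paper invokes from the Atlas table below (64:12:7) — and convert $\frac{e^{i\theta}}{1-e^{i\theta}}=-\frac{1}{2}+\frac{i}{2}\cot\left(\frac{\theta}{2}\right)$ to cotangents, with the $\pm\frac{i\pi}{2l}$ constants cancelling exactly as you describe. The only discrepancy is that the paper's proof text cites equation (\ref{eq5a}), which contains no parameter $k$; this is evidently a typo for (\ref{eq:dilerch}), and your reading (together with the implicit analytic continuation of $\sum z^j$ to $|z|=1$, consistent with the paper's use of the Lerch function's continuation) is the intended argument.
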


\begin{proof}
Use equation (\ref{eq5a}) and set $k=0$ simplify using entry (2) in Table below (64:12:7) in \cite{atlas}.
\end{proof}

\section{Derivation of entry 4.235.1 in \cite{grad}}

\begin{theorem}
For $Re(n)>1$, 
\begin{dmath}\label{eq5c}
\int_{0}^{\infty}\frac{(x-1) x^{n-2} \log (x)}{x^{2 n}-1}dx=-\frac{\pi ^2 \tan ^2\left(\frac{\pi }{2 n}\right)}{4 n^2}
\end{dmath}

\end{theorem}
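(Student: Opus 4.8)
The plan is to obtain this entry as a specialization of the logarithmic master integral (\ref{eq5a}), in exactly the same spirit as the two preceding theorems. Writing the parameters of (\ref{eq5a}) as the triple $(m,n,l)$ and temporarily keeping the target's parameter distinct, I would first expand the numerator of the target as $(x-1)x^{n-2}=x^{n-1}-x^{n-2}$ so that the integrand acquires the shape appearing in (\ref{eq5a}). Matching the denominators forces $l=n$, and matching the two exponents $x^{n-1}$ and $x^{n-2}$ against the $x^{2n}$ and $x^{2m}$ of (\ref{eq5a}) forces the half-integer choices $n\mapsto\frac{n-1}{2}$ and $m\mapsto\frac{n-2}{2}$; this is legitimate precisely because (\ref{eq5a}) is asserted for complex parameters. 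Since $\log\!\left(x^2\right)=2\log(x)$, the target integral equals one half of the right-hand side of (\ref{eq5a}) evaluated at these parameters.

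Next I would simplify the two cosecant-squared terms. With $l=n$ and $n\mapsto\frac{n-1}{2}$ the first argument collapses to $\frac{2\pi\cdot\frac{n-1}{2}+\pi}{2n}=\frac{\pi}{2}$, contributing $\csc^2\!\left(\frac{\pi}{2}\right)=1$. With $m\mapsto\frac{n-2}{2}$ the second argument becomes $\frac{\pi(n-1)}{2n}=\frac{\pi}{2}-\frac{\pi}{2n}$, and the complementary-angle identity $\csc\!\left(\frac{\pi}{2}-\theta\right)=\sec(\theta)$ turns this term into $\sec^2\!\left(\frac{\pi}{2n}\right)$. Hence the bracket in (\ref{eq5a}) becomes $1-\sec^2\!\left(\frac{\pi}{2n}\right)=-\tan^2\!\left(\frac{\pi}{2n}\right)$ by the Pythagorean identity, and combining this with the prefactor $\frac{\pi^2}{2l^2}=\frac{\pi^2}{2n^2}$ and the extra factor $\frac12$ yields exactly $-\frac{\pi^2\tan^2\!\left(\frac{\pi}{2n}\right)}{4n^2}$.

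I expect the genuine difficulty here to be bookkeeping rather than analysis. The main obstacle is the notational collision between the theorem's parameter $n$ and the master integral's own $n$, which must be tracked with care so that the half-integer substitutions and the subsequent trigonometric collapse are applied consistently; the trigonometric step (complementary angle followed by the Pythagorean identity) is the only nonmechanical part, and it is entirely elementary. I would also verify that the convergence hypotheses $Re(m)<Re(l)$ and $Re(n)<Re(l)$ of (\ref{eq5a}) survive the substitution: they reduce to $Re\!\left(\frac{n-1}{2}\right)<Re(n)$ and $Re\!\left(\frac{n-2}{2}\right)<Re(n)$, both of which hold automatically for $Re(n)>-1$ and hence certainly under the stated $Re(n)>1$.
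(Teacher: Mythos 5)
Your proposal is correct and is essentially the paper's own proof: the paper likewise specializes equation (\ref{eq5a}) with $l\mapsto n$ and half-integer exponents, differing only in that it prescribes $n\mapsto\frac{n-2}{2}$, $m\mapsto\frac{n-1}{2}$ (the mirror image of your assignment), which is equivalent since both sides of (\ref{eq5a}) change sign under exchanging $m$ and $n$. Your verification of the trigonometric collapse via the complementary-angle and Pythagorean identities, and of the convergence hypotheses, carries out the ``simplify'' step the paper leaves implicit.
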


\begin{proof}
Use equation (\ref{eq5a}) and replace $n$ by $\frac{n-2}{2}$, $m$ by $\frac{n-1}{2}$ and $l$ by $n$ and simplify.
\end{proof}

\section{Derivation of entry 4.235.2 in \cite{grad}}

\begin{theorem}
For $Re(n)>Re(m)>0$, 
\begin{dmath}
\int_{0}^{\infty}\frac{\left(x^2-1\right) x^{m-1} \log (x)}{x^{2 n}-1}dx=-\frac{\pi ^2 \left(\csc ^2\left(\frac{\pi  m}{2 n}\right)-\csc ^2\left(\frac{\pi 
   (m+2)}{2 n}\right)\right)}{4 n^2}
\end{dmath}
\end{theorem}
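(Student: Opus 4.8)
The plan is to specialize the master logarithmic integral (\ref{eq5a}), exactly as was done for entries 4.235.1 and 3.244.4. First I would rewrite the target numerator in the monomial form that appears on the left of (\ref{eq5a}): since $(x^2-1)x^{m-1} = x^{m+1} - x^{m-1}$, the integrand becomes $\frac{(x^{m+1}-x^{m-1})\log x}{x^{2n}-1}$. Writing $M,N,L$ for the parameters of the master formula to avoid collision with the target's $m,n$, so that (\ref{eq5a}) reads $\int_0^\infty \frac{(x^{2N}-x^{2M})\log(x^2)}{x^{2L}-1}\,dx = \frac{\pi^2(\csc^2(\frac{2\pi N+\pi}{2L})-\csc^2(\frac{2\pi M+\pi}{2L}))}{2L^2}$, the matching is immediate: I take $2N = m+1$, $2M = m-1$ and $2L = 2n$, i.e. set $N = \frac{m+1}{2}$, $M = \frac{m-1}{2}$ and $L = n$.

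The second step handles two bookkeeping factors. The master integrand carries $\log(x^2) = 2\log x$, so after substitution its left-hand side equals twice the target integral, and I would therefore divide the specialized right-hand side by $2$. With $N = \frac{m+1}{2}$ and $M = \frac{m-1}{2}$ the cosecant arguments collapse cleanly, namely $\frac{2\pi N+\pi}{2L} = \frac{\pi(m+2)}{2n}$ and $\frac{2\pi M+\pi}{2L} = \frac{\pi m}{2n}$. Substituting and halving turns the right side of (\ref{eq5a}) into $\frac{\pi^2\left(\csc^2\left(\frac{\pi(m+2)}{2n}\right) - \csc^2\left(\frac{\pi m}{2n}\right)\right)}{4n^2}$, which is precisely the claimed value once the bracket is reordered and the overall minus sign pulled out, using the same cosecant identity invoked in the previous sections.

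Before concluding I would verify that the specialized parameters respect the hypotheses of (\ref{eq5a}), $Re(M)<Re(L)$ and $Re(N)<Re(L)$, and confirm that the stated range $Re(n) > Re(m) > 0$ secures the closed form, with the analytic continuation of the Lerch function in (\ref{eq:dilerch}) licensing the result on this range; one also notes that the double zero of the numerator at $x=1$ against the simple zero of $x^{2n}-1$ renders that point harmless. I expect the only genuine obstacle to be notational: the master formula and the target both use the letters $m$ and $n$, so the assignment $N=\frac{m+1}{2}$, $M=\frac{m-1}{2}$ must be tracked carefully to avoid a sign slip, and the factor of $2$ hidden in $\log(x^2)$ must not be forgotten. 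Everything after the substitution is the routine trigonometric simplification already employed above.
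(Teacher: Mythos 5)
Your proposal is correct and matches the paper's proof in essence: the paper likewise specializes (\ref{eq5a}) with $l \mapsto n$ and the half-integer shifts $\frac{m\pm 1}{2}$ (with the roles of $m$ and $n$ swapped relative to your $M,N$, which merely moves the sign to the other side), then simplifies the cosecants. Your accounting for the factor $\log(x^2)=2\log x$ and the sign from reordering the bracket is exactly the ``simplify'' step the paper leaves implicit.
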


\begin{proof}
Use equations (\ref{eq5a}) and replace $n$ by $\frac{m-1}{2}$, $m$ by $\frac{m+1}{2}$ and $l$ by $n$ simplify using equation (64:10:2) in \cite{atlas}.
\end{proof}

\section{Derivation of entry 4.235.3 in \cite{grad}}

\begin{theorem}
For $Re(n)>2$ and $Im(n)>2$,
\begin{dmath}
\int_{0}^{\infty}\frac{\left(x^2-1\right) x^{n-3} \log (x)}{x^{2 n}-1}dx=-\frac{\pi ^2 \tan ^2\left(\frac{\pi }{n}\right)}{4 n^2}
\end{dmath}
\end{theorem}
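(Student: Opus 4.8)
The plan is to obtain this entry directly from the logarithmic integral already established in equation (\ref{eq5a}), exactly as in the derivations of entries 4.235.1 and 4.235.2, by a suitable substitution of parameters followed by a trigonometric simplification. The governing observation is that, after expanding $(x^2-1)x^{n-3} = x^{n-1} - x^{n-3}$, the integrand here has the same shape as the left-hand side of (\ref{eq5a}): a difference of two powers of $x$, times a logarithm, divided by $x^{2l}-1$. Since equation (\ref{eq5a}) carries $\log(x^2) = 2\log(x)$ whereas the target integrand carries only $\log(x)$, I expect an overall factor of $\tfrac12$ to appear, together with a sign that is fixed by the order in which the two powers are listed.

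First I would match the denominators by setting $l = n$, so that $x^{2l}-1 = x^{2n}-1$. Next I would match the numerator powers: to reproduce $x^{n-1}$ and $x^{n-3}$ I replace $n$ by $\frac{n-3}{2}$ and $m$ by $\frac{n-1}{2}$ in (\ref{eq5a}). With this ordering the difference $x^{2n}-x^{2m}$ in (\ref{eq5a}) becomes $x^{n-3}-x^{n-1}$, i.e. the negative of the target numerator, which supplies the sign flip; combined with the factor $\tfrac12$ coming from $\log(x^2)=2\log(x)$, this accounts for the coefficient $-\tfrac14$ in the final answer.

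The remaining work is to simplify the right-hand side of (\ref{eq5a}) under these substitutions, and this is the step I expect to be the only real obstacle. The two cosecant arguments become $\frac{\pi(n-2)}{2n}$ and $\frac{\pi}{2}$. I would rewrite the first as $\frac{\pi}{2} - \frac{\pi}{n}$ and apply the co-function identity $\csc\left(\frac{\pi}{2}-\theta\right) = \sec\theta$ to turn $\csc^2\left(\frac{\pi(n-2)}{2n}\right)$ into $\sec^2\left(\frac{\pi}{n}\right)$, while $\csc^2\left(\frac{\pi}{2}\right) = 1$. The Pythagorean identity $\sec^2\theta - 1 = \tan^2\theta$ then collapses the difference of the two $\csc^2$ terms to $\tan^2\left(\frac{\pi}{n}\right)$, so that the right-hand side of (\ref{eq5a}) equals $\frac{\pi^2\tan^2(\pi/n)}{2n^2}$. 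Dividing by the factor of $2$ and carrying the sign yields precisely $-\frac{\pi^2\tan^2(\pi/n)}{4n^2}$, as claimed. Finally, I would note that the constraint $Re(n)>2$ is exactly what keeps the substituted values $\frac{n-3}{2}$ and $\frac{n-1}{2}$ inside the domain $Re(\cdot)<Re(l)$ demanded by (\ref{eq5a}), and is also what guarantees convergence of the original integral at the origin.
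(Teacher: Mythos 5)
Your proposal is correct and follows exactly the paper's route: the paper's proof also applies equation (\ref{eq5a}) with $n$ replaced by $\frac{n-3}{2}$, $m$ by $\frac{n-1}{2}$ and $l$ by $n$, then simplifies via the secant--cosecant relation (cited there as (64:4:2) in \cite{atlas}), which is the co-function and Pythagorean reduction you carried out explicitly. Your bookkeeping of the sign from the ordering $x^{n-3}-x^{n-1}$ and the factor $\tfrac12$ from $\log(x^2)=2\log(x)$ is accurate and in fact spells out details the paper leaves implicit.
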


\begin{proof}
Use equations (\ref{eq5a}) and replace $n$ by $\frac{n-3}{2}$, $m$ by $\frac{n-1}{2}$ and $l$ by $n$ simplify using equation (64:4:2) in \cite{atlas}.
\end{proof}

\section{Definite integral in terms of the Polylogarithm function}

Using equation (\ref{eq:dilerch}) and setting $a=1$ simplifying to get

\begin{dmath}
\int_{0}^{\infty}\frac{\log ^k(x) \left(x^{2 n}-x^{2 m}\right)}{x^{2 l}-1}dx=\frac{\pi ^{k+1} \left(\frac{i}{l}\right)^{k-1}
   \left(\text{Li}_{-k}\left(e^{\frac{i (2 m+1) \pi }{l}}\right)-\text{Li}_{-k}\left(e^{\frac{i (2 n+1) \pi }{l}}\right)\right)}{l^2}
\end{dmath}

from equation (64:12:2) in \cite{atlas}.

\section{Definite integral in terms of the logarithm of trigonometric functions }

\begin{theorem}
For all $m,n,p,q,l \in \mathbb{C}$
\begin{dmath}
\int_{0}^{\infty}\frac{x^{2 m}-x^{2 n}-x^{2 p}+x^{2 q}}{\left(x^{2 l}-1\right) \log (x)}dx=\log \left(\frac{\left(\cos \left(\frac{\pi  (n-p)}{l}\right)-\cos
   \left(\frac{\pi  (n+p+1)}{l}\right)\right) e^{-\frac{i \pi  (m-n-p+q)}{l}}}{\cos \left(\frac{\pi  (m-q)}{l}\right)-\cos \left(\frac{\pi 
   (m+q+1)}{l}\right)}\right)
\end{dmath}
\end{theorem}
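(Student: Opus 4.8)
The plan is to specialize the master formula (\ref{eq:dilerch}) at the single point $a=1$, $k=-1$. Setting $a=1$ turns the factor $\log^k(ax^2)$ in the integrand into $\log^{-1}(x^2)=\tfrac{1}{2}\log^{-1}(x)$, so the left-hand side of (\ref{eq:dilerch}) becomes exactly $\tfrac{1}{2}$ of the basic two-term integral $\int_0^\infty (x^{2n}-x^{2m})\,[(x^{2l}-1)\log x]^{-1}\,dx$. First I would note that the four-term numerator decomposes as a sum of two antisymmetric pairs,
\begin{dmath*}
x^{2m}-x^{2n}-x^{2p}+x^{2q}=(x^{2m}-x^{2n})+(x^{2q}-x^{2p}),
\end{dmath*}
so the target integral is a sum of two copies of this building block, one for the pair $(m,n)$ and one for $(q,p)$. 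It therefore suffices to evaluate $I(a,b):=\int_0^\infty (x^{2a}-x^{2b})\,[(x^{2l}-1)\log x]^{-1}\,dx$ and add two instances.

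The second step is to read off the right-hand side of (\ref{eq:dilerch}) at $a=1$, $k=-1$. The crucial special value is $\Phi(z,1,1)=-\log(1-z)/z$, which follows from the series (\ref{lerch:eq}) together with the Mercator expansion of $\log(1-z)$; this is the analytic continuation of $\Phi(z,-k,v)$ to the point $-k=1$, $v=1$ (the shift in the third argument vanishes because $\log a=0$). After simplifying the prefactors $2^k=\tfrac{1}{2}$, $\pi^{k+1}=1$, and $(i/l)^k=-il$, the two $\Phi$-terms collapse to logarithms and I expect to obtain
\begin{dmath*}
I(a,b)=\log\!\left(\frac{1-e^{i\pi(2b+1)/l}}{1-e^{i\pi(2a+1)/l}}\right).
\end{dmath*}
Adding the $(m,n)$ and $(q,p)$ copies and combining the logarithms yields a single logarithm of the ratio $\bigl[(1-e^{i\theta_n})(1-e^{i\theta_p})\bigr]\bigl/\bigl[(1-e^{i\theta_m})(1-e^{i\theta_q})\bigr]$, where $\theta_j=\pi(2j+1)/l$.

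The third step is purely trigonometric. Applying $1-e^{i\alpha}=-2i\,e^{i\alpha/2}\sin(\alpha/2)$ to each factor and then the product-to-sum identity $\sin A\sin B=\tfrac{1}{2}[\cos(A-B)-\cos(A+B)]$, each product $(1-e^{i\theta_n})(1-e^{i\theta_p})$ becomes a constant multiple of $e^{i(\theta_n+\theta_p)/2}[\cos(\pi(n-p)/l)-\cos(\pi(n+p+1)/l)]$, and similarly for the $(m,q)$ product. The numerical factor $-2$ and the imaginary unit cancel between numerator and denominator, the two cosine brackets reproduce exactly the stated numerator and denominator, and the residual exponential phases combine into $e^{i[(\theta_n+\theta_p)-(\theta_m+\theta_q)]/2}=e^{-i\pi(m-n-p+q)/l}$, matching the claimed closed form.

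The hard part will not be the algebra but the \emph{legitimacy} of the specialization to $k=-1$. Formula (\ref{eq:dilerch}) was derived under conditions on $\mathrm{Re}(w+m)$ and $\mathrm{Re}(w+n)$, with $k$ entering through a Cauchy kernel $w^{-k-1}$ that is genuinely singular only for nonnegative integer $k$; at $k=-1$ the factor $w^{-k-1}=w^{0}$ has no pole and the Lerch function degenerates to the convergent value $\Phi(z,1,1)$. I would justify the persistence of the closed form by invoking the analytic-continuation property of $\Phi$ stressed in the introduction, verifying that both sides are holomorphic in $k$ in a neighborhood of $k=-1$, and pinning down the branch of the logarithm consistently with the principal branch used in (\ref{armenia:eq8}). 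Once this continuation is secured, the trigonometric reduction above is routine.
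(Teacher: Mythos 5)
Your proposal is correct and takes essentially the same route as the paper: the paper likewise combines two instances of equation (\ref{eq:dilerch}), one in $(m,n)$ and one with $m,n$ replaced by $p,q$, specializes to $k=-1$, $a=1$, and reduces the Lerch values via $\Phi(z,1,1)=\mathrm{Li}_1(z)/z=-\log(1-z)/z$ (its citation of (64:12:2) in \cite{atlas}). Your explicit trigonometric reduction to the cosine-bracket form and your discussion of the legitimacy of the continuation to $k=-1$ simply supply details that the paper's one-line proof leaves implicit.
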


\begin{proof}
Form a second equation by using equation (\ref{eq:dilerch}) and replacing $m$ by $p$ and $n$ by $q$. Then we take the difference between these two equations and setting $k=-1,a=1$ simplify using equation (64:12:2) in \cite{atlas}.
\end{proof}

\section{Evaluation of a Definite integral of a nested logarithmic function}

\begin{theorem}
\begin{dmath}
\int_{0}^{\infty}\frac{\left(x-x^{2/3}\right) \log (\log (x))}{x^4-1}dx\\
=\frac{\pi }{8 \left(\sqrt{3}+(2-i)\right)} \left(4 \left((1+2 i)+i \sqrt{3}\right)
   \text{Li}_{0}'\left((-1)^{5/6}\right)+\left(-\sqrt{3}\\
   +(2+i)\right) \pi
    +\left(4+4 i \sqrt{3}\right) \log \left(\frac{\pi
   }{2}\right)\right)
\end{dmath}
\end{theorem}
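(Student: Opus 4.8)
The plan is to produce the nested logarithm $\log(\log x)$ by parametric differentiation in the order $k$ of the polylogarithm form of the master integral established above. The key observation is that $(\log x)^k=e^{k\log(\log x)}$, so $\partial_k(\log x)^k=\log(\log x)\,(\log x)^k$ and in particular $\partial_k(\log x)^k\big|_{k=0}=\log(\log x)$. Writing $z_n=e^{i(2n+1)\pi/l}$ and $z_m=e^{i(2m+1)\pi/l}$, the polylogarithm identity of the previous section can be recast as
\begin{dmath*}
\int_{0}^{\infty}\frac{\left(x^{2 n}-x^{2 m}\right)\log ^k(x)}{x^{2 l}-1}\,dx=\frac{i\pi}{l}\left(\frac{i\pi}{l}\right)^{k}\left(\text{Li}_{-k}(z_n)-\text{Li}_{-k}(z_m)\right).
\end{dmath*}
First I would differentiate both sides with respect to $k$, justify interchanging $\partial_k$ with the $x$-integration, and set $k=0$. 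Matching the integrand to the target forces $2l=4$, $2n=1$, $2m=2/3$, i.e.\ $l=2$, $n=\tfrac12$, $m=\tfrac13$, whence $z_n=e^{i\pi}=-1$ and $z_m=e^{i5\pi/6}=(-1)^{5/6}$, precisely the arguments on the right-hand side.

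Next I would differentiate the right-hand side with the product rule. Using $\tfrac{d}{dk}\left(\tfrac{i\pi}{l}\right)^{k}=\log\!\left(\tfrac{i\pi}{l}\right)\left(\tfrac{i\pi}{l}\right)^{k}$ and $\tfrac{d}{dk}\text{Li}_{-k}(z)=-\partial_s\text{Li}_s(z)$ evaluated at $s=-k$, setting $k=0$ gives
\begin{dmath*}
\int_{0}^{\infty}\frac{\left(x^{2 n}-x^{2 m}\right)\log(\log x)}{x^{2 l}-1}\,dx=\frac{i\pi}{l}\left(\log\!\left(\frac{i\pi}{l}\right)\left(\text{Li}_{0}(z_n)-\text{Li}_{0}(z_m)\right)-\left(\text{Li}_{0}'(z_n)-\text{Li}_{0}'(z_m)\right)\right),
\end{dmath*}
where $\text{Li}_0'(z)=\partial_s\text{Li}_s(z)\big|_{s=0}$. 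Here $\text{Li}_0(z)=z/(1-z)$ is elementary, giving $\text{Li}_0(-1)=-\tfrac12$ and $\text{Li}_0\big((-1)^{5/6}\big)=(-\sqrt3+i)/\big((2+\sqrt3)-i\big)$, and for $l=2$ one has $\log(i\pi/2)=\log(\pi/2)+i\pi/2$.

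Finally I would reduce the order-derivatives. The factor $\text{Li}_0'\big((-1)^{5/6}\big)$ stays as in the statement, but $\text{Li}_0'(-1)$ must be evaluated: from $\text{Li}_s(-1)=-(1-2^{1-s})\zeta(s)$ together with $\zeta(0)=-\tfrac12$ and $\zeta'(0)=-\tfrac12\log(2\pi)$ one obtains $\text{Li}_0'(-1)=-\tfrac12\log(\pi/2)$. Substituting all of these, the $z_n$-contributions to the $\log(\pi/2)$ coefficient cancel, since $\text{Li}_0(-1)=-\tfrac12$ offsets $-\text{Li}_0'(-1)=\tfrac12\log(\pi/2)$, and collecting the $\text{Li}_0'\big((-1)^{5/6}\big)$, $\pi$ and $\log(\pi/2)$ pieces over the common denominator $(2+\sqrt3)-i=\sqrt3+(2-i)$ yields the stated closed form. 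The hard part will be twofold: justifying the interchange of differentiation and integration and, with it, the analytic continuation of the polylogarithm identity in $k$ at $k=0$, since the arguments $-1$ and $(-1)^{5/6}$ lie on the unit circle where the defining Dirichlet series diverges; and the closed-form evaluation of $\text{Li}_0'(-1)$ through the Dirichlet eta function and $\zeta'(0)$. The remaining reduction of the complex coefficients over the common denominator is routine but error-prone bookkeeping.
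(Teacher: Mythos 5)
Your proposal is correct and follows essentially the same route as the paper: the paper's proof also specializes equation (\ref{eq:dilerch}) with $a=1$, $n=1/2$, $l=2$, $m=1/3$ (passing to the polylogarithm form via equation (64:12:2) in \cite{atlas}, which is exactly your recast identity), takes the first partial derivative with respect to $k$, and sets $k=0$. Your additional details --- the evaluation $\text{Li}_0'(-1)=-\tfrac12\log(\pi/2)$ via the eta function and $\zeta'(0)$, and the collection of terms over the denominator $\sqrt{3}+(2-i)$ --- check out and merely make explicit the simplification the paper leaves implicit.
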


\begin{proof}
Use equation (\ref{eq:dilerch}) and set $n=1/2,l=2,m=1/3,a=1$ and takie the first partial derivative with respect to $k$ and then set $k=0$ simplify using equation (64:12:2) in \cite{atlas}.
\end{proof}

\section{Definite integral in terms of the trigonometric functions}

Using equation (\ref{eq:dilerch}) and setting $k=2,a=1$ simplifying to get

\begin{theorem}
For all $m,n,l\in\mathbb{C}$
\begin{dmath}
\int_{0}^{\infty}\frac{\log ^2(x) \left(x^{2 n}-x^{2 m}\right)}{x^{2 l}-1}dx
=-\frac{1}{32 l^3}\left(\pi ^3 \csc ^3\left(\frac{2 \pi  m+\pi }{2 l}\right) \csc ^3\left(\frac{2 \pi 
   n+\pi }{2 l}\right) \left(6 \sin \left(\frac{\pi  (m-n)}{l}\right)-\sin \left(\frac{\pi  (3 m-n+1)}{l}\right)-\sin \left(\frac{\pi  (3
   m+n+2)}{l}\right)+\sin \left(\frac{\pi  (-m+3 n+1)}{l}\right)+\sin \left(\frac{\pi  (m+3 n+2)}{l}\right)\right)\right)
\end{dmath}
\end{theorem}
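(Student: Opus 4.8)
The plan is to specialize the master identity (\ref{eq:dilerch}) at $a=1$ and $k=2$ and then collapse the two Lerch values into elementary trigonometric functions. Setting $a=1$ turns $\log(ax^2)$ into $2\log x$ and sends the third Lerch argument $1-\tfrac{il\log a}{2\pi}$ to $1$. Using the reduction $\Phi(z,-k,1)=\mathrm{Li}_{-k}(z)/z$ (entry (64:12:2) in \cite{atlas}) together with $e^{i(2\pi m+\pi)/l}=e^{i(2m+1)\pi/l}$, the exponential prefactors cancel against the $1/z$ and, after dividing the common factor $2^k$ off both sides, one is left with the polylogarithmic form already recorded above. Putting $k=2$ then gives
\[ \int_{0}^{\infty}\frac{\log ^2(x)\left(x^{2 n}-x^{2 m}\right)}{x^{2 l}-1}\,dx=\frac{i\pi^3}{l^3}\bigl(\mathrm{Li}_{-2}(z_m)-\mathrm{Li}_{-2}(z_n)\bigr), \]
where $z_m=e^{i(2m+1)\pi/l}$ and $z_n=e^{i(2n+1)\pi/l}$, so the whole task reduces to evaluating $\mathrm{Li}_{-2}$ at points on the unit circle.

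Next I would invoke the rational closed form $\mathrm{Li}_{-2}(z)=z(1+z)/(1-z)^3$ and pass to half-angles. Writing $z=e^{2i\theta}$ one has $1-e^{2i\theta}=-2ie^{i\theta}\sin\theta$ and $1+e^{2i\theta}=2e^{i\theta}\cos\theta$, whence every exponential cancels and $\mathrm{Li}_{-2}(e^{2i\theta})=\cos\theta/(4i\sin^3\theta)$. Applying this with $\theta=\alpha:=\tfrac{(2m+1)\pi}{2l}$ and $\theta=\beta:=\tfrac{(2n+1)\pi}{2l}$, the stray factor $1/i$ cancels the leading $i$ and the integral becomes $\tfrac{\pi^3}{4l^3}\bigl(\cos\alpha/\sin^3\alpha-\cos\beta/\sin^3\beta\bigr)$. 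Putting the two fractions over the common denominator $\sin^3\alpha\,\sin^3\beta$ factors out exactly the advertised prefactor $\pi^3\csc^3\alpha\,\csc^3\beta$ and leaves the numerator $\cos\alpha\sin^3\beta-\cos\beta\sin^3\alpha$ to be linearized.

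The remaining step, which I expect to be the only laborious part, is the trigonometric bookkeeping. I would reduce each cube via $\sin^3\theta=\tfrac14(3\sin\theta-\sin3\theta)$ and then apply $\cos A\sin B=\tfrac12[\sin(A+B)-\sin(A-B)]$ to each of the four products that arise. The two $\sin(\alpha+\beta)$ terms cancel between the halves, the two first-order terms reinforce into $6\sin(\alpha-\beta)$, and the four cubic terms assemble into $-\sin(3\alpha+\beta)+\sin(\alpha+3\beta)-\sin(3\alpha-\beta)+\sin(3\beta-\alpha)$; converting back via $\alpha-\beta=\tfrac{(m-n)\pi}{l}$, $3\alpha+\beta=\tfrac{(3m+n+2)\pi}{l}$, $\alpha+3\beta=\tfrac{(m+3n+2)\pi}{l}$, $3\alpha-\beta=\tfrac{(3m-n+1)\pi}{l}$ and $3\beta-\alpha=\tfrac{(-m+3n+1)\pi}{l}$ reproduces precisely the five-term sine sum of the statement, with $\cos\beta\sin^3\alpha-\cos\alpha\sin^3\beta=\tfrac18 S$ where $S$ denotes that sum. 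Carrying the constants through ($\tfrac{\pi^3}{4l^3}$ from the polylogarithm step multiplied by the $-\tfrac18$ coming from $\cos\alpha\sin^3\beta-\cos\beta\sin^3\alpha=-\tfrac18 S$) produces the overall coefficient $-\tfrac{1}{32l^3}$ and finishes the identification. The only genuine care required is sign discipline in the odd sine arguments and keeping the roles of $m$ and $n$ straight throughout.
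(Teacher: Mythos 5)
Your proposal is correct --- I verified the specialization of (\ref{eq:dilerch}) at $k=2$, $a=1$, the reduction $\Phi(z,-2,1)=\mathrm{Li}_{-2}(z)/z$, the closed form $\mathrm{Li}_{-2}(e^{2i\theta})=\cos\theta/(4i\sin^{3}\theta)$, and the linearization $\cos\alpha\sin^{3}\beta-\cos\beta\sin^{3}\alpha=-\tfrac18 S$, which together yield exactly the stated coefficient $-\tfrac{1}{32l^{3}}$ --- and it takes essentially the same route as the paper, which likewise sets $k=2$, $a=1$ in (\ref{eq:dilerch}) and invokes equation (64:12:2) in \cite{atlas}. You have merely written out the trigonometric bookkeeping that the paper's one-line proof leaves implicit.
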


\begin{proof}
Use equation (64:12:2) in \cite{atlas}.
\end{proof}

\section{Definite integrals with logarithm in the denominator}

\begin{theorem}
For all $m,n,l\in\mathbb{C}$
\begin{dmath}\label{eq:dil}
\int_{0}^{\infty}\left(\frac{\log (x) \left(x^{2 n}-x^{2 m}\right)}{\left(x^{2 l}-1\right) \left(a^2+\log ^2(x)\right)}+\frac{i a \left(x^{2 m}-x^{2
   n}\right)}{\left(x^{2 l}-1\right) \left(a^2+\log ^2(x)\right)}dx\\
   =e^{\frac{i \pi }{l}} \left(e^{\frac{2 i \pi  n}{l}} \Phi \left(e^{\frac{i (2 n+1)
   \pi }{l}},1,\frac{a l}{\pi }+1\right)-e^{\frac{2 i \pi  m}{l}} \Phi \left(e^{\frac{i (2 m+1) \pi }{l}},1,\frac{a l}{\pi }+1\right)\right)\right)
\end{dmath}
\end{theorem}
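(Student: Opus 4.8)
The plan is to read off the identity as a single specialization of equation~(\ref{eq:dilerch}), choosing its two free constants so that the left-hand side reduces to the stated integrand and the right-hand side reduces to the stated Lerch combination. First I would combine the two summands in the integrand into one fraction. Writing $a^2+\log^2(x)=(\log(x)+ia)(\log(x)-ia)$ and collecting numerators gives
\[
\frac{\log(x)\left(x^{2n}-x^{2m}\right)+ia\left(x^{2m}-x^{2n}\right)}{\left(x^{2l}-1\right)\left(a^2+\log^2(x)\right)}=\frac{\left(x^{2n}-x^{2m}\right)\left(\log(x)-ia\right)}{\left(x^{2l}-1\right)\left(\log(x)+ia\right)\left(\log(x)-ia\right)}=\frac{x^{2n}-x^{2m}}{\left(x^{2l}-1\right)\left(\log(x)+ia\right)},
\]
so the quantity to evaluate is $\int_{0}^{\infty}\frac{x^{2n}-x^{2m}}{\left(x^{2l}-1\right)\left(\log(x)+ia\right)}\,dx$.

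Next I would manufacture this denominator from the logarithmic power in~(\ref{eq:dilerch}). In that equation I would replace the parameter $a$ by $e^{2ia}$ and set $k=-1$. Then $\log(e^{2ia}x^{2})=2ia+2\log(x)=2\left(\log(x)+ia\right)$, so the logarithmic factor becomes $\log^{-1}(e^{2ia}x^{2})=\frac{1}{2}\left(\log(x)+ia\right)^{-1}$ and the left-hand side of~(\ref{eq:dilerch}) reduces to exactly one half of the integral above.

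The same two substitutions would be applied to the right-hand side. The Lerch order $-k$ becomes $1$, and its third argument becomes $1-\frac{il\log(e^{2ia})}{2\pi}=1-\frac{il\cdot 2ia}{2\pi}=1+\frac{al}{\pi}=\frac{al}{\pi}+1$, matching $\Phi\!\left(\cdot,1,\frac{al}{\pi}+1\right)$. The scalar prefactor $\frac{i\,2^{k}\pi^{k+1}(i/l)^{k}}{l}$ at $k=-1$ equals $\frac{i\cdot\frac{1}{2}\cdot 1\cdot(-il)}{l}=\frac{1}{2}$, so the right-hand side collapses to $\frac{1}{2}\left(e^{\frac{i(2n+1)\pi}{l}}\Phi\left(e^{\frac{i(2n+1)\pi}{l}},1,\frac{al}{\pi}+1\right)-e^{\frac{i(2m+1)\pi}{l}}\Phi\left(e^{\frac{i(2m+1)\pi}{l}},1,\frac{al}{\pi}+1\right)\right)$. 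Cancelling the common factor $\frac{1}{2}$ against the left-hand side and using $e^{\frac{i(2n+1)\pi}{l}}=e^{\frac{i\pi}{l}}e^{\frac{2i\pi n}{l}}$ (likewise for $m$) would factor out the overall $e^{i\pi/l}$ and reproduce the asserted right-hand side verbatim.

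The prefactor algebra and the partial-fraction split are routine; the step I expect to need the most care is the legitimacy of evaluating~(\ref{eq:dilerch}) at $k=-1$, since this rests on analytic continuation of the Lerch function to $s=-k=1$ together with a coherent branch choice in $(i/l)^{k}$ and in $\log(e^{2ia})=2ia$. Here I would invoke the integral representation~(\ref{armenia:eq8}), which gives a well-defined value of $\Phi(z,s,v)$ for $z\neq 1$ and $\mathrm{Re}(s)>0$ and hence at $s=1$, and note that the branch $\log(e^{2ia})=2ia$ is the one consistent with the derivation of~(\ref{eq:dilerch}).
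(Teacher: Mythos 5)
Your proposal is correct and follows essentially the same route as the paper: specialize equation (\ref{eq:dilerch}) at $k=-1$ with an exponential substitution for the parameter $a$, then recognize the combined integrand as $\frac{x^{2n}-x^{2m}}{(x^{2l}-1)(\log x+ia)}$ via $(\log x-ia)/(a^2+\log^2 x)=1/(\log x+ia)$. Note that your substitution $a\mapsto e^{2ia}$ is the one that actually reproduces the stated result --- it yields the third Lerch argument $\frac{al}{\pi}+1$ and the factor $\frac12$ that cancels the prefactor --- whereas the paper's proof says ``set $k=-1$, $a=e^{ai}$,'' which would instead give $\frac{al}{2\pi}+1$ and a mismatched denominator; your fully worked version corrects what appears to be a typo in the paper's one-line proof.
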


\begin{proof}
Use equation (\ref{eq:dilerch}) and set $k=-1,a=e^{ai}$ and simplify.
\end{proof}

\subsection{Example 1}

\begin{theorem}
\begin{dmath}
\int_{0}^{\infty}\frac{\left(x-x^{2/3}\right) \log (x)}{\left(x^3-1\right) \left(\log ^2(x)+\pi ^2\right)}dx
=\frac{1}{4} \left(4+\sqrt{3} \pi -8 \cos
   \left(\frac{\pi }{9}\right)+\log \left(\frac{2 \left(1+\sin \left(\frac{\pi }{18}\right)\right)}{9 \left(2-2 \sin \left(\frac{\pi
   }{18}\right)\right)}\right)\right)
\end{dmath}
\end{theorem}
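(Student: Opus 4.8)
The plan is to specialize the general identity (\ref{eq:dil}) to the parameter values forced by matching integrands, then isolate the real part and reduce the resulting Lerch functions to elementary form. Comparing the integrand of Example 1 with the first summand of (\ref{eq:dil}), the numerator $x-x^{2/3}=x^{2n}-x^{2m}$ and the denominator $(x^{3}-1)(\log^{2}x+\pi^{2})=(x^{2l}-1)(a^{2}+\log^{2}x)$ force $n=\tfrac12$, $m=\tfrac13$, $l=\tfrac32$, and $a=\pi$. First I would separate the two halves of the (\ref{eq:dil}) integrand: for real $x>0$ and these real parameters the first summand $\frac{\log(x)(x^{2n}-x^{2m})}{(x^{2l}-1)(a^{2}+\log^{2}x)}$ is real, while the second carries an explicit factor $i$ and is purely imaginary. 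A short asymptotic check (a removable zero at $x=1$, and decay like $x^{-2}(\log x)^{-1}$ as $x\to\infty$ and like $x^{2/3}/\log x$ as $x\to0^{+}$) shows each piece converges, so $\operatorname{Re}$ commutes with the integral. Hence the integral in Example 1 is exactly the real part of the left-hand side of (\ref{eq:dil}), and the task reduces to computing the real part of the right-hand side at the above parameters.

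Next I would substitute the parameters into the right-hand side. The shift becomes $al/\pi+1=\tfrac52$, the overall prefactor is $e^{i\pi/l}=e^{2i\pi/3}$, and the two transcendents become $\Phi\!\left(e^{4i\pi/3},1,\tfrac52\right)$ (from the $n$-term, with inner exponential $e^{2i\pi/3}$) and $\Phi\!\left(e^{10i\pi/9},1,\tfrac52\right)$ (from the $m$-term, with inner exponential $e^{4i\pi/9}$). The key reduction is that a Lerch transcendent of unit modulus and half-integer third parameter is elementary. Starting from $\Phi(z,1,\tfrac12)=\frac{2}{\sqrt{z}}\operatorname{arctanh}(\sqrt{z})$ and applying the shift recurrence $\Phi(z,s,v+1)=z^{-1}\bigl(\Phi(z,s,v)-v^{-s}\bigr)$ twice gives
\[
\Phi\!\left(z,1,\tfrac52\right)=\frac{2\operatorname{arctanh}(\sqrt{z})}{z^{2}\sqrt{z}}-\frac{2}{z^{2}}-\frac{2}{3z}.
\]
Since $\operatorname{arctanh}(e^{i\phi})=\tfrac12\log\!\bigl(i\cot(\phi/2)\bigr)$, the real part of each $\operatorname{arctanh}$ supplies a logarithm of a cotangent and its imaginary part supplies a multiple of $\pi$; these are the sources of the $\log(\cdots)$ and $\sqrt3\,\pi$ terms in the claimed answer.

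Finally I would multiply through by the prefactors, combine the $z^{-2}$ and $z^{-1}$ contributions, and take the real part. For the $n$-term the cube-root angle collapses to $\cot(\pi/3)=1/\sqrt3$, producing the rational multiple of $\log 3$ (the $9$ in the denominator of the logarithm) and the $\sqrt3\,\pi$ term; for the $m$-term the ninth-root angles yield the $\cos(\pi/9)$ and $\sin(\pi/18)$ contributions after half-angle and product-to-sum identities. The main obstacle is exactly this last step: selecting the branches of $\sqrt{z}$ and $\operatorname{arctanh}$ consistent with the analytic continuation of the series onto $|z|=1$, and then grinding the accumulated cotangents, cosines and phases into the compact form $\tfrac14\bigl(4+\sqrt3\,\pi-8\cos(\pi/9)+\log\frac{2(1+\sin(\pi/18))}{9(2-2\sin(\pi/18))}\bigr)$; all the rest is a direct specialization of (\ref{eq:dil}).
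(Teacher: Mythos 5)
Your proposal is correct and is essentially the paper's own proof: the paper likewise sets $a=\pi$, $l=3/2$, $n=1/2$, $m=1/3$ in equation (\ref{eq:dil}) and ``rationalizes the real and imaginary parts,'' with your recurrence-based reduction $\Phi\bigl(z,1,\tfrac52\bigr)=\frac{2\operatorname{arctanh}(\sqrt{z})}{z^{2}\sqrt{z}}-\frac{2}{z^{2}}-\frac{2}{3z}$ together with $\operatorname{arctanh}(e^{i\phi})=\tfrac12\log\bigl(i\cot(\phi/2)\bigr)$ merely making explicit the ``simplify'' step the paper leaves to the reader, and on principal branches it does reproduce the stated closed form. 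One immaterial slip in your bookkeeping: the $\sqrt{3}\,\pi$ term actually comes from the $m$-term, whose contribution $2e^{i\pi/3}\operatorname{arctanh}(e^{5i\pi/9})$ has real part $\tfrac12\log\cot\bigl(\tfrac{5\pi}{18}\bigr)-\tfrac{\sqrt{3}\,\pi}{4}$, while in the $n$-term the prefactors cancel so its $\operatorname{arctanh}$ enters with real coefficient and contributes only $\tfrac13-\tfrac12\log 3$ (the source of the $9$), not any multiple of $\pi$.
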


\begin{proof}
Use equation (\ref{eq:dil}) and set $a=\pi,l=3/2,n=1/2,m=1/3$ and rationalize the real and imaginary parts and simplify.
\end{proof}

\begin{theorem}
\begin{dmath}
\int_{0}^{\infty}\frac{x^{2/3}-x}{\left(x^3-1\right) \left(\log ^2(x)+\pi ^2\right)}dx\\
=\frac{\pi +8 \sin \left(\frac{\pi }{9}\right)+2 \sqrt{3} \left(\tanh
   ^{-1}\left(\sin \left(\frac{\pi }{18}\right)\right)-2\right)}{4 \pi }
\end{dmath}
\end{theorem}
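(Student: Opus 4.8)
The plan is to specialize the master identity (\ref{eq:dil}) to the parameter values $a=\pi$, $l=3/2$, $n=1/2$, and $m=1/3$, exactly the substitution used in the preceding Example~1. Under it the powers become $x^{2n}=x$, $x^{2m}=x^{2/3}$, $x^{2l}=x^{3}$, and $a^{2}=\pi^{2}$, so the complex integrand on the left of (\ref{eq:dil}) is precisely the one relevant here. That integrand separates into a real piece $\frac{\log(x)\,(x-x^{2/3})}{(x^{3}-1)(\log^{2}x+\pi^{2})}$ and an imaginary piece $\frac{i\pi\,(x^{2/3}-x)}{(x^{3}-1)(\log^{2}x+\pi^{2})}$. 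The companion theorem extracted the real part of the integral; here I would instead extract the imaginary part, whose integrand is exactly $\pi$ times the integrand claimed in the statement. Writing $I$ for the integral to be evaluated, this means $\pi I$ equals the imaginary part of the right-hand side of (\ref{eq:dil}).

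Next I would evaluate that right-hand side at these parameters. The prefactor $e^{i\pi/l}$ becomes $e^{2i\pi/3}$, the third Lerch argument $\frac{al}{\pi}+1$ becomes the half-integer $5/2$, and the two $z$-arguments become the roots of unity $e^{4i\pi/3}$ and $e^{10i\pi/9}$; the inner phases $e^{2i\pi n/l}=e^{2i\pi/3}$ and $e^{2i\pi m/l}=e^{4i\pi/9}$ combine with the prefactor. Since the middle parameter is $s=1$, I would use entry (64:12:2) in \cite{atlas} to put each $\Phi(z,1,5/2)$ into elementary closed form. With $v=5/2$ the sum runs against a shifted denominator $2n+5$, so this closed form is an inverse hyperbolic tangent (equivalently a complex logarithm) in $\sqrt{z}$ together with the rational correction terms coming from the missing low-order terms.

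I would then take the imaginary part of the resulting expression and rationalize, as the companion theorem does for the real part. A concrete anchor is the square root $\sqrt{e^{10i\pi/9}}=e^{5i\pi/9}$: since $5\pi/9=\tfrac{\pi}{2}+\tfrac{\pi}{18}$, its real part is $-\sin(\pi/18)$, which is exactly what feeds the $\tanh^{-1}(\sin(\pi/18))$ contribution, while $\sqrt{e^{4i\pi/3}}=e^{2i\pi/3}$ (with $2\pi/3=\tfrac{\pi}{2}+\tfrac{\pi}{6}$) supplies the $\sqrt{3}$ factors and the rational constant. The $\sin(\pi/9)$ term then arises from the imaginary parts of the root-of-unity prefactors combined with the Lerch closed form. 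Dividing through by the coefficient $\pi$ yields the stated value $\frac{\pi+8\sin(\pi/9)+2\sqrt{3}\,(\tanh^{-1}(\sin(\pi/18))-2)}{4\pi}$, and the real part of the same computation reproduces the preceding theorem as a built-in consistency check.

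The main obstacle will be the clean evaluation of $\Phi(z,1,5/2)$ at the non-principal complex arguments $e^{4i\pi/3}$ and $e^{10i\pi/9}$ and the correct splitting into real and imaginary parts. The half-integer third parameter rules out a purely logarithmic answer and introduces the rational pieces, while the ninth- and eighteenth-of-$\pi$ angles force the inverse hyperbolic tangent to be read as a complex logarithm whose branch must be chosen consistently. It is precisely this branch bookkeeping, together with the recombination of the $\sqrt{3}$ and $e^{\pm i\pi/3}$ factors into the compact closed form, that will demand the most care.
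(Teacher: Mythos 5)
Your proposal follows essentially the same route as the paper: the paper's terse proof likewise continues from the specialization $a=\pi$, $l=3/2$, $n=1/2$, $m=1/3$ of equation (\ref{eq:dil}) made in the companion theorem, extracts the remaining (imaginary) part, and reduces $\Phi(z,1,5/2)$ exactly as you describe --- via the Lerch shift recurrence (9.559) in \cite{grad}, which produces your ``rational correction terms,'' together with the $\tanh^{-1}(\sqrt{z})/\sqrt{z}$ closed form for $\Phi(z,1,\tfrac{1}{2})$ from entry (1) in the table below (64:12:7) in \cite{atlas}. The only quibble is your citation of (64:12:2), which is the Lerch--polylogarithm relation for $v=1$ and not the identity needed at $v=5/2$; the computation you outline is nonetheless the paper's.
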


\begin{proof}
Use equation (9.559) in \cite{grad} and entry (1) in Table below (64:12:7) in \cite{atlas}.
\end{proof}

\subsection{Example 2}

\begin{theorem}
\begin{dmath}
\int_{0}^{\infty}\frac{\left(x-x^{2/3}\right) \log (x)}{\left(x^4-1\right) \left(4 \log ^2(x)+\pi ^2\right)}dx=\frac{1}{96} \left(-\pi +24 \log (2)-6 \sqrt{3}
   \log \left(2+\sqrt{3}\right)\right)
\end{dmath}
\end{theorem}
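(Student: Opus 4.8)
The plan is to specialize the master formula in equation~(\ref{eq:dil}) to the parameter values that reproduce the integrand at hand, then take the real part and simplify the resulting Lerch values. Matching $x^{2l}-1=x^4-1$ forces $l=2$, while $x^{2n}-x^{2m}=x-x^{2/3}$ forces $n=1/2$ and $m=1/3$. The denominator $4\log^2(x)+\pi^2$ does not match the factor $a^2+\log^2(x)$ appearing in equation~(\ref{eq:dil}) on the nose; writing $4\log^2(x)+\pi^2=4\bigl(\log^2(x)+(\pi/2)^2\bigr)$ shows that one should take $a=\pi/2$, at the cost of an overall factor of $4$ relating the real part of the integrand of equation~(\ref{eq:dil}) to the integrand of the present statement. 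This mirrors the proof of Example~1, where the choice $a=\pi$ made the denominators agree exactly and no rescaling was needed.

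With $l=2$, $n=1/2$, $m=1/3$, $a=\pi/2$ the data on the right of equation~(\ref{eq:dil}) collapse to elementary quantities: $e^{i\pi/l}=i$, $e^{i(2n+1)\pi/l}=e^{i\pi}=-1$, $e^{2i\pi n/l}=i$, $e^{i(2m+1)\pi/l}=e^{5i\pi/6}$, $e^{2i\pi m/l}=e^{i\pi/3}$, and the third Lerch argument becomes $al/\pi+1=2$. I would then reduce the two Lerch values with integer third argument $\Phi(z,1,2)$ to logarithms via entry (64:12:2) in \cite{atlas}, using $\Phi(z,1,2)=z^{-2}\bigl(-\log(1-z)-z\bigr)$. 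For $z=-1$ this gives $\Phi(-1,1,2)=1-\log 2$, and for $z=e^{5i\pi/6}$ one needs the modulus and argument of $1-e^{5i\pi/6}$.

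Finally I would take the real part of both sides, which isolates the wanted integral because, for real parameters, the first term of the integrand in equation~(\ref{eq:dil}) is real while the $ia(\cdots)$ term is purely imaginary; dividing by the factor $4$ noted above then yields the closed form. The main obstacle is the honest simplification of $\Phi(e^{5i\pi/6},1,2)$: one must recognize that $|1-e^{5i\pi/6}|^2=2+\sqrt3$ and $\arg(1-e^{5i\pi/6})=-\pi/12$ (equivalently $1/(2+\sqrt3)=2-\sqrt3=\tan(\pi/12)$), so that $\log(1-e^{5i\pi/6})=\tfrac12\log(2+\sqrt3)-i\pi/12$. Carrying the real and imaginary parts through the products with $e^{i\pi/3}$ and the leading factor $i$, combining with $-\Phi(-1,1,2)=-1+\log 2$, and dividing by $4$ should produce $\tfrac14\log 2-\tfrac{\sqrt3}{16}\log(2+\sqrt3)-\tfrac{\pi}{96}$, which is exactly $\tfrac{1}{96}\bigl(-\pi+24\log 2-6\sqrt3\log(2+\sqrt3)\bigr)$. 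Keeping the numerous phase factors and signs straight in this last bookkeeping is where errors are most likely to creep in.
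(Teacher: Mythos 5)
Your proposal follows exactly the paper's route: the paper's proof is precisely ``use equation (\ref{eq:dil}) and set $a=\pi/2$, $l=2$, $n=1/2$, $m=1/3$ and rationalize the real and imaginary parts and simplify,'' which is your substitution and real-part extraction. Your write-up is correct and in fact more explicit than the paper, since you spell out the factor of $4$ from $4\log^2(x)+\pi^2=4\bigl(\log^2(x)+(\pi/2)^2\bigr)$ and the reductions $\Phi(-1,1,2)=1-\log 2$ and $\log\bigl(1-e^{5i\pi/6}\bigr)=\tfrac12\log\bigl(2+\sqrt{3}\bigr)-i\pi/12$, all of which check out and reproduce the stated closed form.
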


\begin{proof}
Use equation (\ref{eq:dil}) and set $a=\pi/2,l=2,n=1/2,m=1/3$ and rationalize the real and imaginary parts and simplify.
\end{proof}

\begin{theorem}
\begin{dmath}
\int_{0}^{\infty}\frac{x^{2/3}-x}{\left(x^4-1\right) \left(4 \log ^2(x)+\pi ^2\right)}dx=\frac{\sqrt{3} \pi -6 \cosh ^{-1}(2)}{48 \pi }
\end{dmath}
\end{theorem}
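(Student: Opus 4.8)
The plan is to obtain this integral as the imaginary companion of the integral in the preceding theorem: the two share the same denominator and parameters, and together they are the real and imaginary parts of the single complex integrand in equation (\ref{eq:dil}). Indeed, placing the two summands of (\ref{eq:dil}) over a common denominator and using $a^2+\log^2 x=(\log x-ia)(\log x+ia)$ collapses them to $\frac{x^{2n}-x^{2m}}{(x^{2l}-1)(\log x + ia)}$, so the summand carrying the explicit factor $i$ (with numerator $x^{2m}-x^{2n}=x^{2/3}-x$) is exactly the quantity we want. I would therefore invoke (\ref{eq:dil}) with $a=\pi/2$, $l=2$, $n=1/2$, $m=1/3$ — the same specialization used for the previous theorem — and read the target off from the imaginary part of both sides.

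On the left, for real $x$ and these real parameters the first summand of (\ref{eq:dil}) is real and the second is purely imaginary; since $a^2+\log^2 x=\tfrac14(\pi^2+4\log^2 x)$ when $a=\pi/2$, taking imaginary parts gives $\operatorname{Im}(\text{LHS})=2\pi\int_0^\infty\frac{x^{2/3}-x}{(x^4-1)(\pi^2+4\log^2 x)}\,dx$. Hence the target integral equals $\operatorname{Im}(\text{RHS})/(2\pi)$, and it remains to extract the imaginary part of the right side. With these parameters the third argument of both Lerch functions reduces to $al/\pi+1=2$ and the prefactors become roots of unity, so the right side is $-\Phi(-1,1,2)-i\,e^{i\pi/3}\Phi(e^{5i\pi/6},1,2)$.

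Next I would reduce these order-one Lerch functions to logarithms via $\Phi(z,1,v)=z^{-v}\big(-\log(1-z)-\sum_{j=1}^{v-1}z^j/j\big)$, i.e. the $\operatorname{Li}_1(z)=-\log(1-z)$ reduction recorded in entries (64:12:2) and the Table below (64:12:7) of \cite{atlas}. The value $\Phi(-1,1,2)=1-\log 2$ is real and so contributes nothing to the imaginary part, and after clearing the root-of-unity factors the surviving contribution is $\operatorname{Im}\!\big(-e^{i\pi/6}\log(1-e^{5i\pi/6})\big)$, the stray real term $-e^{i\pi/6}e^{5i\pi/6}=1$ also dropping out. Writing $1-e^{5i\pi/6}=\tfrac{2+\sqrt3}{2}-\tfrac{i}{2}$, its modulus is $\sqrt{2+\sqrt3}$, so $\log|1-e^{5i\pi/6}|=\tfrac12\log(2+\sqrt3)=\tfrac12\cosh^{-1}(2)$ — this is the source of the $\cosh^{-1}(2)$ in the answer — while its argument is $-\arctan(2-\sqrt3)=-\pi/12$, which produces the $\sqrt3\,\pi$ term. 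Assembling these gives $\operatorname{Im}(\text{RHS})=\tfrac{\sqrt3\pi}{24}-\tfrac14\cosh^{-1}(2)$, and dividing by $2\pi$ yields the claimed $\frac{\sqrt3\pi-6\cosh^{-1}(2)}{48\pi}$.

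The main obstacle is the real/imaginary bookkeeping of the complex Lerch value: one must fix the principal branch of $\log(1-z)$ consistently, verify that $\arg(1-e^{5i\pi/6})$ lies in the principal range so that $\arctan(2-\sqrt3)=\pi/12$ applies without a $2\pi$ ambiguity, and confirm that both the real term $\Phi(-1,1,2)$ and the spurious real constant cancel exactly out of the imaginary part. Everything else is routine trigonometric simplification.
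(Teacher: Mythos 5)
Your proposal is correct and follows essentially the paper's own route: the paper obtains this integral as the imaginary companion of the preceding theorem, i.e.\ from the same specialization $a=\pi/2$, $l=2$, $n=1/2$, $m=1/3$ of equation (\ref{eq:dil}), reducing the resulting Lerch values $\Phi(z,1,2)$ to logarithms via the cited formulas (equation (9.559) in \cite{grad} and entry (1) in the table below (64:12:7) of \cite{atlas}) --- precisely the reduction $\Phi(z,1,v)=z^{-v}\bigl(-\log(1-z)-\sum_{j=1}^{v-1}z^{j}/j\bigr)$ that you derive by hand. Your bookkeeping also checks out: the factor $2\pi$ from $a^2+\log^2 x=\tfrac14(4\log^2 x+\pi^2)$, the real terms $\Phi(-1,1,2)=1-\log 2$ and $-e^{i\pi/6}e^{5i\pi/6}=1$ dropping from the imaginary part, and $\bigl|1-e^{5i\pi/6}\bigr|=\sqrt{2+\sqrt3}$ with argument $-\pi/12$, which together reproduce the stated closed form $\frac{\sqrt3\,\pi-6\cosh^{-1}(2)}{48\pi}$.
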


\begin{proof}
Use equation (9.559) in \cite{grad} and entry (1) in Table below (64:12:7) in \cite{atlas}.
\end{proof}

\subsection{Example 3}

\begin{theorem}
\begin{dmath}
\int_{0}^{\infty}\frac{\left(\sqrt{x}-1\right) x}{\left(x^4-1\right) \left(4 \log ^2(x)+\pi ^2\right)}dx=-\frac{\pi -4 \log \left(2+\sqrt{2}\right)}{16 \sqrt{2}
   \pi }
\end{dmath}
\end{theorem}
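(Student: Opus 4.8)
The plan is to recover this integral as the imaginary part of the logarithm-in-denominator identity (\ref{eq:dil}). First I would write the numerator as $(\sqrt{x}-1)x=x^{3/2}-x=x^{2m}-x^{2n}$ and factor the denominator as $(x^4-1)(4\log^2(x)+\pi^2)=4(x^{2l}-1)(a^2+\log^2(x))$. Matching these forces $m=3/4$, $n=1/2$, $l=2$, $a=\pi/2$, and produces an overall prefactor $\tfrac14$ from the $4$ in $4\log^2(x)+\pi^2$. Thus the target integrand is precisely $\tfrac14$ times the purely rational second summand in the bracket of (\ref{eq:dil}), the one carrying the $ia$ factor.

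Second, I would split (\ref{eq:dil}) into real and imaginary parts. For $x>0$ and real parameters both fractions in the bracket are real, so the $\log(x)$-weighted term is the real part and the $ia$-weighted term is the imaginary part. Denoting by $R$ the right-hand side of (\ref{eq:dil}), this gives $\int_0^\infty \frac{x^{2m}-x^{2n}}{(x^{2l}-1)(a^2+\log^2(x))}\,dx=\tfrac1a\,\mathrm{Im}(R)$, so that the target integral equals $\tfrac{1}{4a}\,\mathrm{Im}(R)=\tfrac{1}{2\pi}\,\mathrm{Im}(R)$ after setting $a=\pi/2$. The seeming pole at $x=1$ is removable because $x^{2m}-x^{2n}$ also vanishes there, so the integral converges.

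Third, I would evaluate the two Lerch factors in $R$. With $l=2$ and $a=\pi/2$ the prefactor collapses to $e^{i\pi/l}=i$, the third argument becomes $\tfrac{al}{\pi}+1=2$, and the two base points are $e^{i(2n+1)\pi/l}=e^{i\pi}=-1$ and $e^{i(2m+1)\pi/l}=e^{5i\pi/4}$. Using the elementary evaluation $\Phi(z,1,2)=-\bigl(\log(1-z)+z\bigr)/z^2$, which follows directly from the series (\ref{lerch:eq}) (cf.\ (64:12:2) in \cite{atlas}), I obtain $\Phi(-1,1,2)=1-\log 2$; and since $e^{5i\pi/2}=i$, the factor $e^{3i\pi/4}\Phi(e^{5i\pi/4},1,2)$ simplifies cleanly in terms of $\log(1-e^{5i\pi/4})$. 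After the phase multiplications the purely real pieces cancel and $R$ reduces to $\log 2+e^{3i\pi/4}\log(1-e^{5i\pi/4})$.

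The hard part is the final imaginary-part extraction from $e^{3i\pi/4}\log(1-e^{5i\pi/4})$. The crucial simplification is the polar form of $1-e^{5i\pi/4}=1+\tfrac{1}{\sqrt2}+\tfrac{i}{\sqrt2}$, whose modulus is $\sqrt{2+\sqrt2}$ and whose argument is $\arctan(\sqrt2-1)=\pi/8$, the latter from the half-angle value $\tan(\pi/8)=\sqrt2-1$. Hence $\log(1-e^{5i\pi/4})=\tfrac12\log(2+\sqrt2)+\tfrac{i\pi}{8}$, and multiplying by $e^{3i\pi/4}=\tfrac{\sqrt2}{2}(-1+i)$ and taking the imaginary part yields $\mathrm{Im}(R)=\tfrac{\sqrt2}{2}\bigl(\tfrac12\log(2+\sqrt2)-\tfrac{\pi}{8}\bigr)$. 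Dividing by $2\pi$ and using $\tfrac{\sqrt2}{8\pi}=\tfrac{1}{4\sqrt2\,\pi}$ and $\tfrac{\sqrt2}{32}=\tfrac{1}{16\sqrt2}$ then gives the stated value $-\dfrac{\pi-4\log(2+\sqrt2)}{16\sqrt2\,\pi}$.
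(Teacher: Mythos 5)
Your proposal is correct and follows essentially the same route as the paper: the paper's proof is precisely to take equation (\ref{eq:dil}) with $a=\pi/2$, $l=2$, $n=1/2$, $m=3/4$ and separate real and imaginary parts, which is what you do. You additionally supply the details the paper leaves implicit — the closed form $\Phi(z,1,2)=-\bigl(\log(1-z)+z\bigr)/z^{2}$, the polar form $\log\bigl(1-e^{5i\pi/4}\bigr)=\tfrac12\log\bigl(2+\sqrt{2}\bigr)+\tfrac{i\pi}{8}$, and the $\tfrac{1}{4a}$ rescaling from $4\log^{2}(x)+\pi^{2}=4\bigl(a^{2}+\log^{2}(x)\bigr)$ — and these all check out against the stated value.
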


\begin{proof}
Use equation (\ref{eq:dil}) and set $a=\pi/2,l=2,n=1/2,m=3/4$ and rationalize the real and imaginary parts and simplify.
\end{proof}

\begin{theorem}
\begin{dmath}
\int_{0}^{\infty}\frac{\left(x-x^{3/2}\right) \log (x)}{\left(x^4-1\right) \left(\log ^2(x)+\frac{\pi ^2}{4}\right)}dx=\log (2)-\frac{\log
   \left(2+\sqrt{2}\right)}{2 \sqrt{2}}-\frac{\tan ^{-1}\left(\frac{1}{1+\sqrt{2}}\right)}{\sqrt{2}}
\end{dmath}
\end{theorem}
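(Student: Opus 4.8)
The plan is to recover the stated identity as the real part of equation (\ref{eq:dil}) under the same parameter specialization already used for its companion integral in Example~3. Matching the factor $\log^2(x)+\pi^2/4$ against $a^2+\log^2(x)$ forces $a=\pi/2$; matching $x^4-1$ against $x^{2l}-1$ forces $l=2$; and writing the numerator as $x-x^{3/2}=x^{2n}-x^{2m}$ forces $n=1/2$ and $m=3/4$. With these choices the first summand in the integrand of (\ref{eq:dil}), namely $\log(x)(x^{2n}-x^{2m})/\left[(x^{2l}-1)(a^2+\log^2(x))\right]$, is exactly the target integrand, so the target integral equals the real part of the left-hand side of (\ref{eq:dil}); it therefore suffices to extract the real part of the right-hand side.

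First I would substitute the parameters into the right-hand side of (\ref{eq:dil}). The prefactor $e^{i\pi/l}$ becomes $e^{i\pi/2}=i$; the Lerch arguments $e^{i(2n+1)\pi/l}$ and $e^{i(2m+1)\pi/l}$ become $e^{i\pi}=-1$ and $e^{5i\pi/4}$ respectively; and the third Lerch parameter $al/\pi+1$ collapses to $2$ in both terms. This reduces the right-hand side to $-\Phi(-1,1,2)-i\,e^{3i\pi/4}\,\Phi\!\left(e^{5i\pi/4},1,2\right)$, with the complex coefficient simplifying to $-i\,e^{3i\pi/4}=(1+i)/\sqrt{2}$.

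Next I would evaluate both Lerch values through the closed form $\Phi(z,1,2)=-z^{-2}\log(1-z)-z^{-1}$, which is the second-order specialization of equation (64:12:2) in \cite{atlas}. For $z=-1$ this gives $\Phi(-1,1,2)=1-\log 2$ at once. For $z=e^{5i\pi/4}$ I would compute $1-e^{5i\pi/4}=1+\frac{1}{\sqrt{2}}+\frac{i}{\sqrt{2}}$, whose modulus is $\sqrt{2+\sqrt{2}}$ and whose argument is $\arctan\left(1/(1+\sqrt{2})\right)$, so that $\log\left(1-e^{5i\pi/4}\right)=\frac{1}{2}\log(2+\sqrt{2})+i\arctan\left(1/(1+\sqrt{2})\right)$. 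Combining this with $z^{-2}=-i$ and $z^{-1}=e^{3i\pi/4}=(-1+i)/\sqrt{2}$ yields the real and imaginary parts of $\Phi\!\left(e^{5i\pi/4},1,2\right)$ in closed form.

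Finally I would multiply $\Phi\!\left(e^{5i\pi/4},1,2\right)$ by the coefficient $(1+i)/\sqrt{2}$, take the real part, and add the real contribution $-\Phi(-1,1,2)=\log 2-1$. The main obstacle is the careful bookkeeping of the real and imaginary parts of the complex Lerch function at the eighth root of unity $e^{5i\pi/4}$, together with the correct branch choice for the complex logarithm supplied by the modulus--argument computation above. A convenient consistency check is that the constant $-1$ coming from $\Phi(-1,1,2)$ cancels the $+1$ produced by the $z^{-1}$ term, after which the surviving terms assemble into $\log 2-\frac{\log(2+\sqrt{2})}{2\sqrt{2}}-\frac{\arctan\left(1/(1+\sqrt{2})\right)}{\sqrt{2}}$, which is the asserted value.
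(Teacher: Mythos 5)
Your proposal is correct and takes essentially the same route as the paper: both recover the integral as the real part of equation (\ref{eq:dil}) specialized at $a=\pi/2$, $l=2$, $n=1/2$, $m=3/4$ (the parameters of Example~3, whose companion theorem supplies the imaginary part), and then reduce the two Lerch values at $z=-1$ and $z=e^{5i\pi/4}$ to elementary form. The paper's terse proof merely cites equation (9.559) in \cite{grad} and the table entry in \cite{atlas} for exactly the closed form $\Phi(z,1,2)=-z^{-2}\log(1-z)-z^{-1}$ that you derive from the series and carry out explicitly, so your write-up is a correct, fully detailed version of the paper's argument.
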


\begin{proof}
Use equation (9.559) in \cite{grad} and entry (1) in Table below (64:12:7) in \cite{atlas}.
\end{proof}

\section{Definite integrals of product logarithmic functions in terms of fundamental constants}

\begin{theorem}
For all $k,m,n,l\in\mathbb{C}$
\begin{dmath}
\int_{0}^{\infty}\frac{\log (x) \left(x^{2 m}+x^{2 n}\right) \log ^k\left(a x^2\right)}{x^{2 l}-1}dx
=-\frac{2^{k-1} \pi ^{k+1} e^{\frac{i \pi }{l}}
   \left(\frac{i}{l}\right)^k}{l^2} \left(e^{\frac{2 i \pi  m}{l}} \left(2 \pi  \Phi \left(e^{\frac{i (2 m+1) \pi }{l}},-k-1,1-\frac{i l \log (a)}{2 \pi
   }\right)+i l \log (a) \Phi \left(e^{\frac{i (2 m+1) \pi }{l}},-k,1-\frac{i l \log (a)}{2 \pi }\right)\right)+e^{\frac{2 i \pi  n}{l}} \left(2 \pi 
   \Phi \left(e^{\frac{i (2 n+1) \pi }{l}},-k-1,1-\frac{i l \log (a)}{2 \pi }\right)+i l \log (a) \Phi \left(e^{\frac{i (2 n+1) \pi }{l}},-k,1-\frac{i
   l \log (a)}{2 \pi }\right)\right)\right)
\end{dmath}
\end{theorem}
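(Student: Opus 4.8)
The plan is to obtain the stated formula by parametric differentiation of the master evaluation (\ref{eq:dilerch}) with respect to the exponent parameters $m$ and $n$. The key elementary observation is that $\frac{\partial}{\partial m}x^{2m}=2\log(x)\,x^{2m}$, so differentiating the integrand of (\ref{eq:dilerch}) in $m$ manufactures exactly the extra factor $\log(x)$ appearing in the target integrand, while the factor $\log^k(ax^2)$ and the kernel $1/(x^{2l}-1)$ are left untouched. This is cleaner than the alternative substitution $\log(x)=\tfrac12\bigl(\log(ax^2)-\log(a)\bigr)$, which would force a shift in $k$.

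First I would write $I(m,n)$ for the left-hand side of (\ref{eq:dilerch}) and form the combination $-\tfrac12\partial_m I+\tfrac12\partial_n I$. Since the integrand of (\ref{eq:dilerch}) carries $x^{2n}-x^{2m}$, these two derivatives produce $\int_0^\infty \frac{\log(x)\,x^{2m}\log^k(ax^2)}{x^{2l}-1}\,dx$ and $\int_0^\infty \frac{\log(x)\,x^{2n}\log^k(ax^2)}{x^{2l}-1}\,dx$ respectively, with the signs arranged so that the two pieces add, and their sum is precisely the left-hand side of the theorem. Interchanging $\partial_m$ with $\int_0^\infty$ is the single analytic point to check; I would justify it by noting that both sides of (\ref{eq:dilerch}) are holomorphic in $m$ and $n$ on the parameter region where the integral converges, so the parametric derivatives of the two sides agree there, and the resulting identity then extends to all complex parameters through the analytic continuation already carried by the Lerch function.

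Next I would differentiate the right-hand side. Writing the $m$-dependent block of (\ref{eq:dilerch}) as $F(m)=z(m)\,\Phi\!\left(z(m),-k,v\right)$ with $z(m)=e^{i\pi(2m+1)/l}$ and $v=1-\tfrac{il\log(a)}{2\pi}$ (observe that the prefactor $e^{i(2\pi m+\pi)/l}$ equals $z(m)$, and that $v$ does not depend on $m,n$), I would apply the chain rule with $z'(m)=\tfrac{2\pi i}{l}z(m)$ together with the Lerch recurrence $z\,\partial_z\Phi(z,s,v)=\Phi(z,s-1,v)-v\,\Phi(z,s,v)$, a standard differentiation property of the Lerch function (see \cite{atlas}). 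This gives $F'(m)=z'(m)\bigl[\Phi(z,-k-1,v)+(1-v)\Phi(z,-k,v)\bigr]$, and substituting $1-v=\tfrac{il\log(a)}{2\pi}$ and $z'(m)=\tfrac{2\pi i}{l}e^{i\pi/l}e^{2\pi i m/l}$ yields $F'(m)=\tfrac{i}{l}e^{i\pi/l}e^{2\pi i m/l}\bigl[2\pi\,\Phi(z,-k-1,v)+il\log(a)\,\Phi(z,-k,v)\bigr]$, with the analogous expression in $n$.

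Finally I would assemble $-\tfrac12\partial_m I+\tfrac12\partial_n I=\tfrac{C}{2}\bigl(F'(m)+F'(n)\bigr)$, where $C=\tfrac{i2^k\pi^{k+1}(i/l)^k}{l}$ is the overall constant of (\ref{eq:dilerch}), and collapse the numerical factors via $\tfrac{C}{2}\cdot\tfrac{i}{l}=-\tfrac{2^{k-1}\pi^{k+1}(i/l)^k}{l^2}$. This reproduces the claimed closed form verbatim, with the two Lerch summands attached to $e^{2\pi i m/l}$ and $e^{2\pi i n/l}$. The only genuine obstacle is the legitimacy of differentiating under the integral sign; once that is settled by holomorphy, the remainder is a bookkeeping exercise in the Lerch recurrence and the simplification of constants.
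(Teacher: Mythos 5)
Your proposal is correct and follows essentially the same route as the paper, which likewise forms the two partial derivatives of equation (\ref{eq:dilerch}) with respect to $n$ and $m$ and adds them; you have merely made explicit the details the paper leaves implicit, namely the factor $\tfrac12$ bookkeeping, the Lerch recurrence $z\,\partial_z\Phi(z,s,v)=\Phi(z,s-1,v)-v\,\Phi(z,s,v)$, and the justification for differentiating under the integral sign. Your constant simplification $\tfrac{C}{2}\cdot\tfrac{i}{l}=-\tfrac{2^{k-1}\pi^{k+1}(i/l)^{k}}{l^{2}}$ checks out against the stated closed form.
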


\begin{proof}
Form two equations by first taking first partial derivative equation (\ref{eq:dilerch}) with respect to $n$, then again take the first partial derivative of equation (\ref{eq:dilerch}) with respect to $m$. Then add these two equations and simplify.
\end{proof}

\begin{theorem}
For all $k,n,l\in\mathbb{C}$
\begin{dmath}\label{eq:tt1}
\int_{0}^{\infty}\frac{x^{2 n} \log (x) \log ^k\left(a x^2\right)}{x^{2 l}-1}dx=-\frac{2^{k-1} \pi ^{k+1} \left(\frac{i}{l}\right)^k e^{\frac{i (2 \pi  n+\pi
   )}{l}}}{l^2} \left(2 \pi  \Phi \left(e^{\frac{i (2 \pi  n+\pi )}{l}},-k-1,1-\frac{i l \log (a)}{2 \pi }\right)+i l \log (a) \Phi \left(e^{\frac{i (2 \pi 
   n+\pi )}{l}},-k,1-\frac{i l \log (a)}{2 \pi }\right)\right)
\end{dmath}
\end{theorem}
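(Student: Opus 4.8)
The plan is to obtain this single-term integral by differentiating the master formula (\ref{eq:dilerch}) with respect to the parameter $n$ and then halving. Observe first that the two exponential prefactors appearing in (\ref{eq:dilerch}) coincide, since $e^{i(2\pi n+\pi)/l}=e^{i\pi(2n+1)/l}$; accordingly I write $z=e^{i\pi(2n+1)/l}$ and $v=1-\tfrac{il\log a}{2\pi}$, so that the $n$-dependent summand on the right of (\ref{eq:dilerch}) is $\tfrac{i\,2^k\pi^{k+1}(i/l)^k}{l}\,z\,\Phi(z,-k,v)$, while the $m$-summand is $n$-independent.

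First I would differentiate both sides of (\ref{eq:dilerch}) with respect to $n$, passing $\partial_n$ inside the integral on the left. The monomial $x^{2m}$ drops out, and since $\partial_n x^{2n}=2x^{2n}\log x$ the left-hand side becomes $2\int_{0}^{\infty}\frac{x^{2n}\log(x)\log^k(ax^2)}{x^{2l}-1}\,dx$, which is twice the target integral. On the right only the $z\,\Phi(z,-k,v)$ term survives. The core computation is then $\partial_n\!\left[z\,\Phi(z,-k,v)\right]$, which I would carry out by the chain rule using $\frac{dz}{dn}=\frac{2\pi i}{l}z$ together with the derivative identity for the Lerch transcendent
\begin{equation*}
\frac{\partial}{\partial z}\Phi(z,s,v)=\frac{1}{z}\bigl(\Phi(z,s-1,v)-v\,\Phi(z,s,v)\bigr),
\end{equation*}
which follows from termwise differentiation of the series (\ref{lerch:eq}) and a shift of the summation index (the index-zero terms cancelling). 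Taking $s=-k$ yields $\partial_n[z\Phi(z,-k,v)]=\frac{2\pi i}{l}\,z\bigl(\Phi(z,-k-1,v)+(1-v)\Phi(z,-k,v)\bigr)$, and substituting $1-v=\tfrac{il\log a}{2\pi}$ converts the second term into $\tfrac{il\log a}{2\pi}\,\Phi(z,-k,v)$.

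It then remains to divide both sides by $2$ and collect the constants. For the first term the factor $\frac{2\pi i}{l}$ multiplies the prefactor $\frac{i\,2^k\pi^{k+1}(i/l)^k}{l}$; using $i^2=-1$ this produces the coefficient $-\frac{2^{k}\pi^{k+2}(i/l)^k z}{l^2}$ of $\Phi(z,-k-1,v)$, which is exactly the $2\pi$-weighted term of (\ref{eq:tt1}) after pulling out $-\frac{2^{k-1}\pi^{k+1}(i/l)^k z}{l^2}$. For the second term the same prefactor combined with $\frac{2\pi i}{l}\cdot\frac{il\log a}{2\pi}$ collapses to $-\frac{i\log a}{l}\,2^{k-1}\pi^{k+1}(i/l)^k z$, matching the $il\log a$-weighted term. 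Gathering these recovers the right-hand side of (\ref{eq:tt1}) verbatim.

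The only genuine obstacle is the differentiation of $\Phi$ through its first argument $z$, which relies on the contiguous relation displayed above rather than on any term-by-term evaluation. A secondary point requiring care is the interchange of $\partial_n$ with the integral; this is legitimate on the open parameter region where the integral and its $n$-derivative converge, which can be justified through the integral representation (\ref{armenia:eq8}) of the Lerch function and a dominated-convergence argument, after which the identity extends to all admissible complex $k,n,l$ by analytic continuation.
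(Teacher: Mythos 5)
Your proof is correct and takes essentially the same approach as the paper: the paper's proof obtains this theorem by setting $m=n$ (and halving) in the preceding product-logarithm theorem, which was itself derived by exactly the parameter differentiation of equation (\ref{eq:dilerch}) that you carry out explicitly via $\frac{dz}{dn}=\frac{2\pi i}{l}z$ and the contiguous relation $z\,\partial_z\Phi(z,s,v)=\Phi(z,s-1,v)-v\,\Phi(z,s,v)$; differentiating in $n$ alone so that the $m$-term drops out is just a more direct packaging of the same computation, and your constant bookkeeping checks out. (Note that the paper's printed proof cites the theorem's own equation label --- evidently a typo for the preceding formula --- so your write-up is in fact the more self-contained of the two.)
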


\begin{proof}
Use equation (\ref{eq:tt1}) set $m=n$ and simplify.
\end{proof}

\begin{theorem}
For all $k\in\mathbb{C}$
\begin{dmath}
\int_{0}^{\infty}\frac{x \log (x) \log ^k\left(a x^2\right)}{x^4-1}dx=i^k 2^{k-2} \pi ^{k+1} \left(2 \pi  \zeta \left(-k-1,\frac{\pi -i \log (a)}{2 \pi }\right)-2
   \pi  \zeta \left(-k-1,1-\frac{i \log (a)}{2 \pi }\right)+i \log (a) \left(\zeta \left(-k,\frac{\pi -i \log (a)}{2 \pi }\right)-\zeta
   \left(-k,1-\frac{i \log (a)}{2 \pi }\right)\right)\right)
\end{dmath}
\end{theorem}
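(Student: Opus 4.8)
The plan is to obtain this integral as the special case $n=\tfrac{1}{2}$, $l=2$ of the previously derived formula (\ref{eq:tt1}). First I would substitute these values into the left-hand side of (\ref{eq:tt1}): since $2n=1$ and $2l=4$, the integrand $\frac{x^{2n}\log(x)\log^{k}(ax^{2})}{x^{2l}-1}$ collapses exactly to $\frac{x\log(x)\log^{k}(ax^{2})}{x^{4}-1}$, so the left-hand side already matches the target and no further work is needed there.

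The bulk of the work is on the right-hand side. With $n=\tfrac12$ and $l=2$ the exponent in the prefactor becomes $\frac{i(2\pi n+\pi)}{l}=\frac{i(2\pi)}{2}=i\pi$, so $e^{i\pi}=-1$; in particular the first Lerch argument $e^{\frac{i(2\pi n+\pi)}{l}}$ reduces to $-1$. Likewise the third Lerch argument $1-\frac{il\log(a)}{2\pi}$ becomes $1-\frac{i\log(a)}{\pi}$. After also inserting $l=2$ into the explicit factors, the right-hand side of (\ref{eq:tt1}) reads as a fixed scalar multiple of $2\pi\,\Phi\!\left(-1,-k-1,1-\frac{i\log(a)}{\pi}\right)+2i\log(a)\,\Phi\!\left(-1,-k,1-\frac{i\log(a)}{\pi}\right)$.

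The key step is to convert the Lerch transcendent evaluated at $z=-1$ into Hurwitz zeta functions. Writing $\Phi(-1,s,v)=\sum_{j\geq 0}(-1)^{j}(v+j)^{-s}$ and separating the even and odd indices $j=2p$ and $j=2p+1$, each subseries becomes a Hurwitz zeta after factoring out $2^{-s}$, giving the bisection identity $\Phi(-1,s,v)=2^{-s}\bigl(\zeta(s,\tfrac{v}{2})-\zeta(s,\tfrac{v+1}{2})\bigr)$. Applying this at $s=-k-1$ and $s=-k$ with $v=1-\frac{i\log(a)}{\pi}$, and simplifying the half-arguments through $\tfrac{v}{2}=\frac{\pi-i\log(a)}{2\pi}$ and $\tfrac{v+1}{2}=1-\frac{i\log(a)}{2\pi}$, produces exactly the four zeta terms appearing in the statement.

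The final bookkeeping is to consolidate the constants. The factor $2^{-s}$ from the bisection identity contributes $2^{k+1}$ at $s=-k-1$ and $2^{k}$ at $s=-k$, and these combine with the prefactor $-\frac{2^{k-1}\pi^{k+1}(i/2)^{k}}{l^{2}}e^{i\pi}$ (using $(i/2)^{k}=i^{k}2^{-k}$, $e^{i\pi}=-1$, $l^{2}=4$) to yield the stated overall coefficient $i^{k}2^{k-2}\pi^{k+1}$. The main obstacle I anticipate is precisely this tracking of the powers of $2$: one must confirm that the $2^{-s}$ gained from the even/odd splitting cancels correctly against the $2^{k-1}$ and $(i/2)^{k}$ already present, and that the asymmetry between the $\zeta(-k-1,\cdot)$ and $\zeta(-k,\cdot)$ contributions (the former carrying one extra factor of $2$) lands as the $2\pi$ versus $i\log(a)$ weighting inside the parentheses.
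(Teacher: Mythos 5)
Your proposal is correct and follows exactly the paper's route: the paper's proof is simply ``use equation (\ref{eq:tt1}), set $n=1/2$, $l=2$ and simplify,'' and your bisection identity $\Phi(-1,s,v)=2^{-s}\left(\zeta\left(s,\tfrac{v}{2}\right)-\zeta\left(s,\tfrac{v+1}{2}\right)\right)$ is precisely the implicit simplification step, with your constant bookkeeping ($e^{i\pi}=-1$, $(i/2)^k=i^k2^{-k}$, $l^2=4$, and the factors $2^{k+1}$ and $2^k$ from the even/odd split) landing on the stated coefficient $i^k2^{k-2}\pi^{k+1}$. The only point worth noting is that at $s=-k-1$ and $s=-k$ the alternating series for $\Phi(-1,s,v)$ need not converge, so the bisection identity should be invoked by analytic continuation rather than termwise summation, a subtlety the paper itself glosses over.
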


\begin{proof}
Use equation (\ref{eq:tt1}) set $n=1/2,l=2$ and simplify.
\end{proof}

\begin{theorem}
For all $k\in\mathbb{C}$
\begin{dmath}\label{eq:dll}
\int_{0}^{\infty}\frac{x \log (x) \log \left(\log \left(x^2\right)\right) \log ^k\left(x^2\right)}{x^4-1}dx=2^{-k-4} e^{\frac{i \pi  k}{2}} \left(\left((4 \pi
   )^{k+2}-(2 \pi )^{k+2}\right) \zeta '(-k-1)+\zeta (k+2) \left(i \pi  \left(2^{k+2}-1\right)+2^{k+3} \log (2 \pi )-2 \log (\pi )\right) \cos
   \left(\frac{\pi  k}{2}\right) \Gamma (k+2)\right)
\end{dmath}
\end{theorem}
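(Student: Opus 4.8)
The plan is to produce the $\log\log$ integrand by parametric differentiation in $k$. The starting observation is that
\[
\frac{\partial}{\partial k}\log^k(x^2)=\log\bigl(\log(x^2)\bigr)\,\log^k(x^2),
\]
so that differentiating $\int_0^\infty \frac{x\log(x)\log^k(x^2)}{x^4-1}\,dx$ with respect to $k$ reproduces exactly the left-hand side of the claim. I would therefore start from the theorem immediately preceding this one---the evaluation of $\int_0^\infty \frac{x\log(x)\log^k(a x^2)}{x^4-1}\,dx$, equivalently equation (\ref{eq:tt1}) with $n=1/2$ and $l=2$---and specialise to $a=1$. Because $\log a=0$, the two $\zeta(-k,\cdot)$ terms vanish and the Hurwitz parameters collapse to $\tfrac12$ and $1$, leaving
\[
\int_0^\infty \frac{x\log(x)\log^k(x^2)}{x^4-1}\,dx=i^k\,2^{k-1}\pi^{k+2}\bigl(\zeta(-k-1,\tfrac12)-\zeta(-k-1)\bigr).
\]

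Next I would reduce the Hurwitz zeta to the Riemann zeta via $\zeta(s,\tfrac12)=(2^s-1)\zeta(s)$ and $\zeta(s,1)=\zeta(s)$, which rewrites the right-hand side as the single product $F(k)=\tfrac{\pi^2}{2}(2\pi i)^k(2^{-k-1}-2)\zeta(-k-1)$. Differentiating $F$ in $k$ is then a product-rule computation: the exponential prefactor contributes $\partial_k(2\pi i)^k=(2\pi i)^k\bigl(\log(2\pi)+\tfrac{i\pi}{2}\bigr)$, the factor $2^{-k-1}$ contributes a $\log 2$, and $\partial_k\zeta(-k-1)=-\zeta'(-k-1)$ supplies the derivative term. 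The outcome is a sum of an undifferentiated piece proportional to $\zeta(-k-1)$ and a differentiated piece proportional to $\zeta'(-k-1)$.

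To reach the stated closed form I would apply the functional equation, in the form $\zeta(-k-1)=-2(2\pi)^{-k-2}\cos\bigl(\tfrac{\pi k}{2}\bigr)\Gamma(k+2)\zeta(k+2)$, to the undifferentiated piece only; this is what manufactures the $\zeta(k+2)\cos(\tfrac{\pi k}{2})\Gamma(k+2)$ factor in the claim, while the $\zeta'(-k-1)$ piece is carried through unchanged. After collecting the powers of $2$ and $\pi$ its coefficient assembles into $(4\pi)^{k+2}-(2\pi)^{k+2}$; indeed one checks directly that
\[
\tfrac{\pi^2}{2}(2\pi i)^k\bigl(2-2^{-k-1}\bigr)=2^{-k-4}e^{i\pi k/2}\bigl((4\pi)^{k+2}-(2\pi)^{k+2}\bigr),
\]
matching the $\zeta'(-k-1)$ coefficient exactly.

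I expect the main obstacle to be the bookkeeping that merges the three separate sources of logarithms---the $\log(2\pi)$ and $\tfrac{i\pi}{2}$ from the prefactor, the $\log 2$ from the $2^{-k-1}$ term, and the constants carried in by the functional equation---into the single grouped coefficient $i\pi(2^{k+2}-1)+2^{k+3}\log(2\pi)-2\log\pi$ attached to the $\zeta(k+2)$ term. This demands a consistent branch choice $i^k=e^{i\pi k/2}$, so that $\log(2\pi i)=\log(2\pi)+\tfrac{i\pi}{2}$, together with careful tracking of the cancellations between the two halves of $2^{-k-1}-2$ once the functional equation has been inserted. Throughout, differentiation under the integral sign is justified formally, as elsewhere in the paper.
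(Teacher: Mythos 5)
Your proposal is correct and follows essentially the same route as the paper: the paper's proof likewise differentiates the preceding theorem (equation (\ref{eq:tt1}) specialised to $n=1/2$, $l=2$) with respect to $k$, sets $a=1$, and simplifies via the reduction $\zeta(s,\tfrac12)=(2^s-1)\zeta(s)$ and the functional equation of $\zeta$ (the Atlas identities cited there), with your reversal of the order of differentiation and the substitution $a=1$ being immaterial since the two operations commute. Your explicit verification of the coefficients, including the identity $\tfrac{\pi^2}{2}(2\pi i)^k\bigl(2-2^{-k-1}\bigr)=2^{-k-4}e^{i\pi k/2}\bigl((4\pi)^{k+2}-(2\pi)^{k+2}\bigr)$ and the assembly of $i\pi(2^{k+2}-1)+2^{k+3}\log(2\pi)-2\log\pi$, is accurate and in fact more detailed than the paper's one-line proof.
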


\begin{proof}
We take the first partial derivative with respect to $k$ and set $a=1$ and simplify using equation (64:12:1) and entry (2) in Table below (64:7) in \cite{atlas}.
\end{proof}

\subsection{Example 1}

\begin{theorem}
\begin{dmath}
\int_{0}^{\infty}\frac{x \log (x) \log \left(x^2\right) \log \left(\log \left(x^2\right)\right)}{x^4-1}dx=-\frac{7}{16} i \pi  \zeta (3)
\end{dmath}
\end{theorem}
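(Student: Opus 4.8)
The plan is to obtain this evaluation as the single specialization $k=1$ of equation (\ref{eq:dll}). First I would match the integrands: setting $k=1$ in the left-hand side of (\ref{eq:dll}) turns $\log^k(x^2)$ into $\log(x^2)$, which reproduces exactly the integrand $\frac{x\log(x)\log(x^2)\log(\log(x^2))}{x^4-1}$ of the present theorem. Thus it remains only to evaluate the right-hand side of (\ref{eq:dll}) at $k=1$, and no new integration is needed.

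Next I would simplify the right-hand side term by term. The overall prefactor becomes $2^{-k-4}e^{i\pi k/2}=2^{-5}e^{i\pi/2}=\frac{i}{32}$. Inside the bracket, the coefficient of $\zeta'(-k-1)$ is $(4\pi)^{k+2}-(2\pi)^{k+2}$, which at $k=1$ equals $(4\pi)^3-(2\pi)^3=64\pi^3-8\pi^3=56\pi^3$. The key observation is that the entire second term, the one carrying $\zeta(k+2)$ and the $\log(2\pi)$, $\log\pi$ constants, is multiplied by $\cos(\pi k/2)$; since $\cos(\pi/2)=0$, this whole term vanishes at $k=1$, so only the $\zeta'(-2)$ contribution survives.

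The one substantive input is the closed form of $\zeta'(-2)$. I would invoke the standard consequence of the functional equation of the Riemann zeta function, $\zeta'(-2n)=\frac{(-1)^n(2n)!}{2(2\pi)^{2n}}\zeta(2n+1)$, which at $n=1$ gives $\zeta'(-2)=-\frac{\zeta(3)}{4\pi^2}$. Substituting this together with the simplifications above yields
\[
\frac{i}{32}\cdot 56\pi^3\cdot\left(-\frac{\zeta(3)}{4\pi^2}\right)=-\frac{14\,i\pi\,\zeta(3)}{32}=-\frac{7}{16}\,i\pi\,\zeta(3),
\]
which is precisely the claimed value.

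The main obstacle here is not analytic but bookkeeping: one must correctly track the powers of $2$ and $\pi$ and confirm the fortunate vanishing of the $\cos(\pi k/2)$ factor at $k=1$, so that no $\log(2\pi)$ or $\log\pi$ terms contaminate the final constant. The only genuinely external fact required is the value $\zeta'(-2)=-\zeta(3)/(4\pi^2)$; everything else is a direct substitution into (\ref{eq:dll}).
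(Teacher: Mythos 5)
Your proposal is correct and follows exactly the paper's route: the paper's proof is simply ``use equation (\ref{eq:dll}), set $k=1$, and simplify,'' which is what you carry out. Your detailed bookkeeping --- the prefactor $\frac{i}{32}$, the coefficient $56\pi^3$, the vanishing of the $\cos(\pi k/2)$ term at $k=1$, and the input $\zeta'(-2)=-\zeta(3)/(4\pi^2)$ --- is all accurate and merely makes explicit what the paper leaves to the reader.
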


\begin{proof}
Use equation (\ref{eq:dll}) and set $k=1$ and simplify.
\end{proof}

\subsection{Example 2}

\begin{theorem}
\begin{dmath}
\int_{0}^{\infty}\frac{x \log (x) \log \left(\log \left(x^2\right)\right)}{\left(x^4-1\right) \log \left(x^2\right)}dx=\frac{1}{16} \pi  (\pi -2 i \log
   (2))
\end{dmath}
\end{theorem}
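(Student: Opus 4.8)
The plan is to obtain this evaluation as the single specialization $k=-1$ of the master formula (\ref{eq:dll}). When $k=-1$ the power $\log^{k}\!\left(x^{2}\right)$ becomes $\log^{-1}\!\left(x^{2}\right)=1/\log\!\left(x^{2}\right)$, so the left-hand side of (\ref{eq:dll}) collapses to precisely the target integral
\[
\int_{0}^{\infty}\frac{x \log(x)\, \log\!\left(\log\left(x^{2}\right)\right)}{\left(x^{4}-1\right)\log\left(x^{2}\right)}\,dx .
\]
Hence the entire task reduces to evaluating the right-hand side of (\ref{eq:dll}) at $k=-1$, and no new integration is required.

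The substitution is not a naive plug-in, however, and this is where the main obstacle lies. The second group on the right of (\ref{eq:dll}) carries the factor $\zeta(k+2)\cos\!\left(\frac{\pi k}{2}\right)$, which at $k=-1$ is the indeterminate product $\zeta(1)\cos(-\pi/2)$, i.e. a simple pole times a simple zero. I would resolve this by writing $k=-1+\varepsilon$ and using the Laurent expansion $\zeta(1+\varepsilon)=\varepsilon^{-1}+\gamma+O(\varepsilon)$ together with $\cos\!\left(\frac{\pi(-1+\varepsilon)}{2}\right)=\sin\!\left(\frac{\pi\varepsilon}{2}\right)=\frac{\pi\varepsilon}{2}+O(\varepsilon^{3})$. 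Their product then tends to $\pi/2$ as $\varepsilon\to 0$; in particular the Euler--Mascheroni constant $\gamma$ drops out, since it multiplies a factor that vanishes to first order.

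Every remaining factor is analytic at $k=-1$, so I would evaluate them directly: $2^{-k-4}\to\frac18$, $e^{i\pi k/2}\to -i$, $\Gamma(k+2)\to 1$, and $(4\pi)^{k+2}-(2\pi)^{k+2}\to 2\pi$, while the inner bracket $i\pi\left(2^{k+2}-1\right)+2^{k+3}\log(2\pi)-2\log(\pi)$ tends to $i\pi+4\log(2\pi)-2\log(\pi)$. For the first group I would invoke the standard special value $\zeta'(0)=-\frac12\log(2\pi)$, giving $\left((4\pi)^{k+2}-(2\pi)^{k+2}\right)\zeta'(-k-1)\to 2\pi\,\zeta'(0)=-\pi\log(2\pi)$.

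Combining the pieces, the quantity multiplying $\frac18(-i)$ becomes $-\pi\log(2\pi)+\frac{\pi}{2}\bigl(i\pi+4\log(2\pi)-2\log(\pi)\bigr)$. Expanding $\log(2\pi)=\log 2+\log\pi$ and simplifying, the $\log\pi$ terms cancel and this reduces to $\pi\log 2+\frac{i\pi^{2}}{2}$. Multiplying by $\frac18(-i)$ yields $\frac{\pi^{2}}{16}-\frac{i\pi\log 2}{8}=\frac{\pi}{16}\bigl(\pi-2i\log 2\bigr)$, which is the asserted value. I expect the only genuinely delicate step to be the $0\cdot\infty$ limit in the second paragraph; everything else is routine simplification with known zeta-function constants.
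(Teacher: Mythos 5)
Your proposal is correct and matches the paper's proof, which likewise sets $k=-1$ in equation (\ref{eq:dll}) and simplifies using the Laurent expansion of $\zeta(s)$ at $s=1$ (the paper cites equation (6.8) in \cite{edwards} for exactly the indeterminate $\zeta(k+2)\cos\left(\frac{\pi k}{2}\right)$ limit you resolve). Your write-up simply makes explicit the $0\cdot\infty$ analysis and the constant evaluations that the paper leaves as ``simplify.''
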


\begin{proof}
Use equation (\ref{eq:dll}) and set $k=-1$ and simplify using equation (6.8) in \cite{edwards}.
\end{proof}

\subsection{Example 3}

\begin{theorem}
\begin{dmath}
\int_{0}^{\infty}\frac{x \log (x) \log \left(\log \left(x^2\right)\right)}{x^4-1}dx=\frac{1}{32} \pi ^2 \left(8 \log \left(\frac{\sqrt[3]{2} \sqrt[4]{\pi
   }}{A^3}\right)+2+i \pi \right)
\end{dmath}
\end{theorem}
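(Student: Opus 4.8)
The plan is to obtain this evaluation as the special case $k=0$ of the master formula (\ref{eq:dll}). Setting $k=0$ in the integrand of (\ref{eq:dll}) removes the factor $\log^k(x^2)$, since $\log^0(x^2)=1$, and what remains is exactly $\frac{x\log(x)\log(\log(x^2))}{x^4-1}$. Thus no manipulation of the integral itself is required; the entire task reduces to evaluating the closed form on the right-hand side of (\ref{eq:dll}) at $k=0$.

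First I would substitute $k=0$ term by term into the right-hand side of (\ref{eq:dll}). The prefactors collapse cleanly: $2^{-k-4}\to 2^{-4}=\frac{1}{16}$ and $e^{i\pi k/2}\to 1$, while $\cos(\pi k/2)\to 1$ and $\Gamma(k+2)\to\Gamma(2)=1$. The bracketed power difference becomes $(4\pi)^{k+2}-(2\pi)^{k+2}\to 16\pi^2-4\pi^2=12\pi^2$, and the zeta factors become $\zeta'(-k-1)\to\zeta'(-1)$ and $\zeta(k+2)\to\zeta(2)=\frac{\pi^2}{6}$. The remaining logarithmic factor reduces to $i\pi(2^{2}-1)+2^{3}\log(2\pi)-2\log(\pi)=3i\pi+8\log(2\pi)-2\log(\pi)$, which I would rewrite as $3i\pi+8\log 2+6\log\pi$.

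The step that introduces the distinctive constant is the Glaisher–Kinkelin identity $\zeta'(-1)=\tfrac{1}{12}-\log A$, which converts the derivative of the zeta function into the closed-form constant $A$ appearing in the statement. Substituting this into $12\pi^2\,\zeta'(-1)=\pi^2-12\pi^2\log A$ and combining with $\frac{\pi^2}{6}\bigl(3i\pi+8\log 2+6\log\pi\bigr)$ yields, after factoring out $\frac{\pi^2}{96}$, the expression $6-72\log A+3i\pi+8\log 2+6\log\pi$.

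The only genuinely fiddly part will be the final logarithm bookkeeping: I would verify that $8\log 2 + 6\log\pi - 72\log A + 6$ equals $3\bigl(8\log(2^{1/3}\pi^{1/4}A^{-3})+2\bigr)$ by expanding $8\log(2^{1/3}\pi^{1/4}A^{-3})=\tfrac{8}{3}\log 2+2\log\pi-24\log A$ and clearing the common factor of $3$. This confirms that the whole expression collapses to $\frac{\pi^2}{32}\bigl(8\log(2^{1/3}\pi^{1/4}A^{-3})+2+i\pi\bigr)$, matching the claimed right-hand side. No analytic obstacle arises: the argument is purely a careful evaluation of (\ref{eq:dll}) at an integer point, combined with the standard identity for $\zeta'(-1)$.
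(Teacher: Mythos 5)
Your proposal is correct and follows essentially the same route as the paper's own proof: set $k=0$ in equation (\ref{eq:dll}) and simplify via the Glaisher--Kinkelin identity $\zeta'(-1)=\tfrac{1}{12}-\log A$, which is precisely the content of equation (2.15) in \cite{finch} that the paper cites. Your evaluation of each factor and the final $\tfrac{\pi^2}{96}\bigl(6-72\log A+3i\pi+8\log 2+6\log\pi\bigr)$ bookkeeping check out against the stated closed form.
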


\begin{proof}
Use (\ref{eq:dll}) and set $k=0$ and simplify using equation (2.15), pp. 135-145 in \cite{finch}.
\end{proof}

\section{Table of integrals}

\renewcommand{\arraystretch}{2.0}
\begin{tabular}{ l  c }
  \hline			
  $f(x)$ & $\int_{0}^{\infty}f(x)dx$\\  \hline
  $\frac{\log ^k(x) \left(x^{2 n}-x^{2 m}\right)}{x^{2 l}-1}$ & $\frac{\pi ^{k+1} \left(\frac{i}{l}\right)^{k-1}}{l^2}
   \left(\text{Li}_{-k}\left(e^{\frac{i (2 m+1) \pi }{l}}\right)-\text{Li}_{-k}\left(e^{\frac{i (2 n+1) \pi }{l}}\right)\right)$  \\
  $\frac{x^{2 n}-x^{2 m}}{x^{2 l}-1}$ & $\frac{\pi  \left(\cot \left(\frac{2 \pi  m+\pi }{2 l}\right)-\cot \left(\frac{2 \pi  n+\pi }{2
   l}\right)\right)}{2 l}$  \\
  $\frac{\log \left(x^2\right) \left(x^{2 n}-x^{2 m}\right)}{x^{2 l}-1}$ & $\frac{\pi ^2 \left(\csc ^2\left(\frac{2 \pi  n+\pi }{2 l}\right)-\csc
   ^2\left(\frac{2 \pi  m+\pi }{2 l}\right)\right)}{2 l^2}$  \\
  $-\frac{(x-1) x^{n-2} \log (x)}{x^{2 n}-1}$ & $\frac{\pi ^2 \tan ^2\left(\frac{\pi }{2 n}\right)}{4 n^2}$  \\
  $-\frac{\left(x^2-1\right) x^{m-1} \log (x)}{x^{2 n}-1}$ & $\frac{\pi ^2 \left(\csc ^2\left(\frac{\pi  m}{2 n}\right)-\csc ^2\left(\frac{\pi 
   (m+2)}{2 n}\right)\right)}{4 n^2}$  \\
  $\frac{x^{2 m}-x^{2 n}-x^{2 p}+x^{2 q}}{\left(x^{2 l}-1\right) \log (x)}$ & $\log \left(\frac{\left(\cos \left(\frac{\pi  (n-p)}{l}\right)-\cos
   \left(\frac{\pi  (n+p+1)}{l}\right)\right) e^{-\frac{i \pi  (m-n-p+q)}{l}}}{\cos \left(\frac{\pi  (m-q)}{l}\right)-\cos \left(\frac{\pi 
   (m+q+1)}{l}\right)}\right)$  \\
  $\frac{\left(x-x^{2/3}\right) \log (\log (x))}{x^4-1}$ & $\frac{\pi  \left(4 \left((1+2 i)+i \sqrt{3}\right)
   \text{Li}_{0}'\left((-1)^{5/6}\right)+\left(-\sqrt{3}+(2+i)\right) \pi +\left(4+4 i \sqrt{3}\right) \log \left(\frac{\pi
   }{2}\right)\right)}{8 \left(\sqrt{3}+(2-i)\right)}$  \\
  $\frac{\left(x-x^{2/3}\right) \log (x)}{\left(x^3-1\right) \left(\log ^2(x)+\pi ^2\right)}$ & $\frac{1}{4} \left(4+\sqrt{3} \pi -8 \cos
   \left(\frac{\pi }{9}\right)+\log \left(\frac{2 \left(1+\sin \left(\frac{\pi }{18}\right)\right)}{9 \left(2-2 \sin \left(\frac{\pi
   }{18}\right)\right)}\right)\right)$  \\
  $\frac{x^{2/3}-x}{\left(x^3-1\right) \left(\log ^2(x)+\pi ^2\right)}$ & $\frac{\pi +8 \sin \left(\frac{\pi }{9}\right)+2 \sqrt{3} \left(\tanh
   ^{-1}\left(\sin \left(\frac{\pi }{18}\right)\right)-2\right)}{4 \pi }$  \\
  $\frac{\left(x-x^{2/3}\right) \log (x)}{\left(x^4-1\right) \left(4 \log ^2(x)+\pi ^2\right)}$ & $\frac{1}{96} \left(-\pi +24 \log (2)-6 \sqrt{3}
   \log \left(2+\sqrt{3}\right)\right)$  \\
  $\frac{x^{2/3}-x}{\left(x^4-1\right) \left(4 \log ^2(x)+\pi ^2\right)}$ & $\frac{\sqrt{3} \pi -6 \cosh ^{-1}(2)}{48 \pi }$  \\
  $\frac{\left(\sqrt{x}-1\right) x}{\left(x^4-1\right) \left(4 \log ^2(x)+\pi ^2\right)}$ & $-\frac{\pi -4 \log \left(2+\sqrt{2}\right)}{16 \sqrt{2}
   \pi }$  \\
 $\frac{x \log (x) \log \left(x^2\right) \log \left(\log \left(x^2\right)\right)}{x^4-1}$ & $-\frac{7}{16} i \pi  \zeta (3)$  \\
 $\frac{x \log (x) \log \left(\log \left(x^2\right)\right)}{\left(x^4-1\right) \log \left(x^2\right)}$ & $\frac{1}{16} \pi  (\pi -2 i \log
   (2))$  \\
 $\frac{x \log (x) \log \left(\log \left(x^2\right)\right)}{x^4-1}$ & $\frac{1}{32} \pi ^2 \left(8 \log \left(\frac{\sqrt[3]{2} \sqrt[4]{\pi
   }}{A^3}\right)+2+i \pi \right)$ 
   \\[0.3cm]
  \hline  
\end{tabular}

\section{Discussion}

In this work we looked at deriving definite integrals involving the logarithmic function, function of powers and polynomials in terms of the Lerch function.  One of our goals we to supply a table for easy reading by researchers and to have these results added to existing textbooks.

The results presented were numerically verified for both real and imaginary values of the parameters in the integrals using Mathematica by Wolfram. We considered various ranges of these parameters for real, integer, negative and positive values. We compared the evaluation of the definite integral to the evaluated Special function and ensured agreement.

\section{Conclusion}
In this paper we used our method to evaluate definite integrals using the Lerch function. The contour we used was specific to  solving integral representations in terms of the Lerch function. We expect that other contours and integrals can be derived using this method.


\end{document}